\newtheorem{theorem}{Theorem}
\newtheorem{lemma}{Lemma}
\newtheorem{assumption}{Assumption}
\newtheorem{definition}{Definition}
\newcommand{\R}{\mathbb{R}}
\newcommand{\barf}{f}
\newcommand{\mR}{\mathbf{R}}
\newcommand{\mC}{\mathbf{C}}
\newcommand{\mrR}{\mathrm{R}}
\newcommand{\mrC}{\mathrm{C}}
\newcommand{\bE}{\mathbb{E}}
\newcommand{\mI}{\mathbf{I}}
\newcommand{\T}{\intercal}
\newcommand{\inneighbor}[1]{\mathcal{N}^\mathrm{in}_{#1}}
\newcommand{\outneighbor}[1]{\mathcal{N}^\mathrm{out}_{#1}}
\newcommand{\mx}{\mathbf{x}}
\newcommand{\my}{\mathbf{y}}
\newcommand{\mM}{\mathbf{M}}
\newcommand{\ms}{\mathbf{s}}
\newcommand{\mepsilon}{\boldsymbol{\epsilon}}
\newcommand{\mxi}{\boldsymbol{\xi}}
\newcommand{\ox}{\bar{x}}
\newcommand{\oy}{\bar{y}}
\newcommand{\mW}{\mathbf{W}}
\title{\LARGE \bf
	A Robust Gradient Tracking Method for Distributed Optimization over Directed Networks
}
\author{Shi~Pu
	\thanks{This work was supported in parts by the Shenzhen Research Institute of Big Data Startup Fund No. J00120190011.}
	\thanks{S. Pu is with School of Data Science, Shenzhen Research Institute of Big Data, The Chinese University of Hong Kong, Shenzhen, China.
		{\tt\small (email: pushi@cuhk.edu.cn)}}%
}
\newcommand{\mA}{\mathbf{A}}
\newcommand{\mB}{\mathbf{B}}
\newcommand{\ta}{\tilde{\alpha}}
\def\sp#1{\textcolor{black}{#1}}
\begin{document}

	\maketitle
	\thispagestyle{empty}
	\pagestyle{empty}

	\begin{abstract}
		In this paper, we consider the problem of distributed consensus optimization over multi-agent networks with directed network topology. Assuming each agent has a local cost function that is smooth and strongly convex, the global objective is to minimize the average of all the local cost functions. To solve the problem, we introduce a robust gradient tracking method (R-Push-Pull) adapted from the recently proposed Push-Pull/AB algorithm \cite{pu2018push,xin2018linear}. R-Push-Pull inherits the advantages of Push-Pull and enjoys linear convergence to the optimal solution with exact communication.
		Under noisy information exchange, R-Push-Pull is more robust than the existing gradient tracking based algorithms; the solutions obtained by each agent reach a neighborhood of the optimum in expectation exponentially fast under a constant stepsize policy. We provide a numerical example that demonstrate the effectiveness of R-Push-Pull.
	\end{abstract}

	\section{Introduction}
	
	We consider a system of $n$ agents communicating through a network to collaboratively solve the following optimization problem:
	\begin{equation} \label{problem}
	\min\limits_{x\in\R^p}~\barf(x):=\frac{1}{n}\sum\limits_{i=1}^n f_i(x),
	\end{equation}
	where $x$ is the global decision variable and each function $f_i: \R^p\rightarrow \R$ is known by agent $i$ only. 
	The objective 
	is to obtain an optimal and consensual solution through local updates and local neighbor communications and information exchange. Such local exchange is desirable when sharing a large amount of data is prohibitively expensive due to limited communication resources, or when privacy needs to be preserved for individual agents. Scenarios in which problem (\ref{problem}) is considered include distributed machine learning \cite{nedic2017fast,cohen2017projected,forero2010consensus}, multi-agent target seeking \cite{pu2016noise,chen2012diffusion}, and wireless networks \cite{cohen2017distributed,mateos2012distributed,baingana2014proximal}, among many others.
	
	To solve problem~\eqref{problem} in a multi-agent system, 
	many distributed first-order algorithms have been proposed under various assumptions on the objective functions 
	and the underlying network topology~\cite{shi2015extra,seaman2017optimal,uribe2017optimal,xu2015augmented,di2016next,li2019decentralized,nedic2015distributed,xi2017add,zeng2017extrapush,xu2017convergence,pu2018swarming,pu2020distributed,nedic2017achieving,qu2017harnessing,xi2018linear}. Most works, including \cite{shi2015extra,seaman2017optimal,uribe2017optimal,xu2015augmented,di2016next,li2019decentralized,pu2020distributed}, 
	often restrict the network connectivity structure to undirected graphs, or more commonly require doubly stochastic mixing matrices.
	For strongly convex and smooth objective functions, EXTRA~\cite{shi2015extra} first uses a gradient difference structure to achieve the typical linear convergence rates of a centralized gradient method. 
	Recently, the gradient tracking technique has been employed to develop
	decentralized algorithms which are able to track the average of the gradients~\cite{xu2015augmented,di2016next,qu2017harnessing,nedic2017achieving,xi2017add,tian2020achieving} and enjoy linear convergence under possibly time-varying graphs.
	
	In this work, we are interested in the scenario where agents interact in a general directed network that includes undirected network as a special case.
	For directed graphs, constructing a doubly stochastic matrix needs weight balancing which requires an independent iterative process over the network. To avoid such a procedure, most efforts adopted the push-sum technique introduced in \cite{kempe2003gossip} for reaching average consensus over directed graphs. The work in \cite{tsianos2012push} first proposed a push-sum based distributed optimization algorithm for directed graphs. In~\cite{nedic2015distributed}, a push-sum based decentralized subgradient method was proposed and analyzed for time-varying directed graphs. For smooth objective functions, the work in~\cite{zeng2017extrapush,xi2017dextra} modifies the algorithm from~\cite{shi2015extra} with the push-sum technique, thus providing a new algorithm which converges linearly for smooth and strongly convex objective functions.
	
	Equipped with the gradient tracking technique, the algorithms developed in~\cite{xi2017add,xi2018linear,nedic2017achieving,xi2018linear,tian2020achieving} enjoy linear convergence for possibly time-varying directed graphs \cite{nedic2017achieving} or under asynchronous updates \cite{tian2020achieving}.
	Recently, the work in \cite{pu2018push,pu2020push,xin2018linear} introduced a modified gradient-tracking algorithm called Push-Pull/AB for distributed consensus optimization over directed networks. Unlike the push-sum protocol, Push-Pull uses a row stochastic matrix for the mixing of the decision variables, while it employs a column stochastic matrix for tracking the average gradients. It unifies different computational architectures and can be implemented asynchronously or over time-varying networks \cite{pu2020push,saadatniaki2020decentralized}. Accelerated version of the algorithm has also been developed \cite{xin2019distributed}.
	
	Despite that the aforementioned gradient tracking based algorithms are able to achieve linear convergence over directed networks under the smoothness and strong convexity condition, they are not robust to errors caused by noisy communication links or quantization \cite{srivastava2011distributed,cavalcante2013distributed,reisizadeh2018quantized,alistarh2017qsgd}. Considering noisy information exchange is extremely important in distributed optimization, for example, when one wishes to lower communication bandwidth costs among the agents by performing gradient compression techniques \cite{reisizadeh2018quantized,alistarh2017qsgd}, or when there is receiver-side noise corruption of signals in wireless networks \cite{cavalcante2013distributed}. As we will show both theoretically and through experiments, existing gradient tracking based methods fail in this scenario due to inaccurate tracking of the average gradients.
	
	To address the challenge, we propose a novel gradient tracking algorithm (R-Push-Pull) for distributed optimization that is robust to noisy information exchange. R-Push-Pull inherits the advantages of Push-Pull and achieves linear convergence to the optimal solution under noiseless communication links. It also allows for flexible network design and unifies different computational architectures.
	
	Our work is related to the literature in distributed stochastic optimization, where only stochastic gradient information is available (see e.g., \cite{nedic2016stochastic,pu2019sharp,pu2020asymptotic,koloskova2019decentralized,pu2020distributed,xin2019distributed2}) and distributed optimization using quantized information (see e.g., \cite{reisizadeh2018quantized,reisizadeh2019exact,taheri2020quantized}). For instance, the work in \cite{pu2020distributed,xin2019distributed} combined gradient tracking with stochastic gradient updates and show the iterates convergence linearly to a neighborhood of the optimal solution assuming strongly convex and smooth objectives. The paper \cite{reisizadeh2019exact} studied an exact quantized decentralized gradient descent algorithm which achieves a vanishing mean solution
	error under customary conditions for quantizers.
	A recent work \cite{taheri2020quantized} considered quantized push-sum for decentralized optimization over directed graphs and establishes subliear convergence to the optimal solution.

	
	\subsection{Main Contribution}
	
	Our main contribution of the paper is summarized as follows. Firstly, we introduce a novel gradient tracking method (R-Push-Pull) for distributed optimization over directed networks. The method achieves linear convergence to the optimal solution for minimizing the sum of smooth and strongly convex objective functions. Secondly, R-Push-Pull addresses the challenge of noisy information exchange.  It is shown to be more robust than the other linearly convergent, gradient tracking based algorithms such as Push-Pull/AB, in the sense that the solutions obtained by each agent running R-Push-Pull reach a neighborhood of the optimum in expectation exponentially fast under a constant stepsize policy, while the other competing algorithms lead to divergent solutions. Finally, we provide a numerical example that demonstrate the effectiveness of R-Push-Pull.
	
	\subsection{Notation}
	\label{subsec: notation}
	Vectors default to columns if not otherwise specified.
	Let each agent $i\in\{1,2,\ldots,n\}$ hold a local copy $x_i\in\mathbb{R}^p$ of the decision variable and an auxiliary variable $s_i\in\mathbb{R}^p$, where their
	values at iteration $k$ are denoted by $x_{i,k}$ and $s_{i,k}$, respectively. Denote
	\begin{eqnarray*}
		&\mx:=[x_1, x_2, \ldots,x_n]^{\T}\in\mathbb{R}^{n\times p},\\
		&\ms:=[s_1, s_2, \ldots,s_n]^{\T}\in\mathbb{R}^{n\times p}.
	\end{eqnarray*}
	Define $F(\mx)$ to be an aggregate objective function of the local variables,
	i.e., $F(\mx):=\sum_{i=1}^nf_i(x_i)$, and let
	\begin{equation*}
	\nabla F(\mx):=\left[\nabla f_1(x_1), \nabla f_2(x_2), \ldots, \nabla f_n(x_n)\right]^{\T}\in\mathbb{R}^{n\times p}.
	\end{equation*}
	
	\begin{definition}\label{def: norm n p}
		Given an arbitrary vector norm $\|\cdot\|$ on $\mathbb{R}^n$, for any $\mx\in\mathbb{R}^{n\times p}$, we define
		\begin{equation*}
		\|\mx\|:=\left\|\left[\|\mx^{(1)}\|,\|\mx^{(2)}\|,\ldots,\|\mx^{(p)}\|\right]\right\|_2,
		\end{equation*}
		where $\mx^{(1)},\mx^{(2)},\ldots,\mx^{(p)}\in\mathbb{R}^n$ are columns of $\mx$, and $\|\cdot\|_2$ represents the usual $2$-norm.
	\end{definition}
	
	\begin{definition}
		Given a square matrix $\mM$, its spectral radius is denoted by $\rho(\mM)$.
	\end{definition}
	
	
	\subsection{Organization}
		The rest of this paper is organized as follows. We state the problem of interest in Section \ref{sec: problem}. Then, we introduce the robust Push-Pull algorithm in Section~\ref{sec: method} along with the motivation. 
	We establish the convergence property of R-Push-Pull in Section~\ref{sec: analysis}.
	In Section~\ref{sec: numerical} we provide a simple numerical example. Section \ref{sec: conclusions} concludes the paper.

	\section{Problem Formulation}
	\label{sec: problem}
	
	We consider agents interact with each other in a general directed network (graph). 	A directed graph is a pair $\mathcal{G}=(\mathcal{V},\mathcal{E})$, where $\mathcal{V}$ is the set of vertices (nodes) and $\mathcal{E}\subseteq \mathcal{V}\times \mathcal{V}$ denotes the edge set consisted of ordered pairs of vertices.	
	Given a nonnegative matrix $\mathbf{M}=[m_{ij}]\in\mathbb{R}^{n\times n}$, the directed graph induced by $\mathbf{M}$ is 
	denoted by $\mathcal{G}_\mathbf{M}=(\mathcal{N},\mathcal{E}_\mathbf{M})$, where 
	$\mathcal{N}=\{1,2,\ldots,n\}$ and $(j,i)\in\mathcal{E}_\mathbf{M}$ if and only if $m_{ij}>0$.  For an arbitrary agent $i\in\mathcal{N}$, its in-neighbor set $\inneighbor{\mathbf{M},i}$ is defined as the collection of all individual agents that $i$ can actively and reliably pull data from in the graph $\mathcal{G}_\mathbf{M}$; we also define its out-neighbor set $\outneighbor{\mathbf{M},i}$ as the collection of all individual agents that can passively and reliably receive data from agent $i$.
	
	To solve Problem (\ref{problem}), assume each agent $i$ hold a local copy $x_i\in\mathbb{R}^p$ of the decision variable.
	Then Problem (\ref{problem}) can be written in the following equivalent form:
	\begin{equation} \label{problem_equiv}
	\begin{array}{c}
	\min\limits_{x_1,x_2\ldots,x_n\in\R^p}\sum\limits_{i=1}^n f_i(x_i)\\
	\text{s.t. }x_1=x_2=\cdots=x_n,
	\end{array}
	\end{equation}
	where the consensus constraint is imposed.
	
	Regarding the objective functions $f_i$ in problem~\eqref{problem}, we assume the following strong convexity and smoothness conditions.
	\begin{assumption}
		\label{asp; strconvex Lipschitz}
		Each $f_i$ is $\mu$-strongly convex with $L$-Lipschitz continuous gradients, i.e., for any $x,x'\in\mathbb{R}^p$,
		\begin{equation}
		\begin{split}
		& \langle \nabla f_i(x)-\nabla f_i(x'),x-x'\rangle\ge \mu\|x-x'\|^2,\\
		& \|\nabla f_i(x)-\nabla f_i(x')\|\le L \|x-x'\|.
		\end{split}
		\end{equation}
	\end{assumption}
	Under Assumption \ref{asp; strconvex Lipschitz}, problem~\eqref{problem} has a unique optimal 
	solution $x^*\in\mathbb{R}^{1\times p}$.

	
	\section{A Robust Gradient Tracking Method}
	\label{sec: method}
	
	We describe the robust Push-Pull Method in Algorithm 1, where $\epsilon_{i,k}\in\R^p$ and $\xi_{i,k}\in\R^p$ summarize the noise encountered by agent $i$ during the information exchange at step $k$. Sources of the noise include quantization \cite{alistarh2017qsgd} and/or noisy communication links \cite{reisizadeh2018quantized}.
		\smallskip	
	\begin{table}
		\normalsize
		\centering
		{\textbf{Algorithm 1: A Robust Push-Pull Method}}
		
		\smallskip
		\begin{tabularx}{0.5\textwidth}{X}
			\hline
			Choose stepsize $\alpha> 0$ and $\gamma,\eta\in(0,1]$,\\
			\quad in-bound mixing/pulling weights $R_{ij}\geq0$ for all $j\in\inneighbor{\mathbf{R},i}$,\\
			\quad and out-bound pushing weights $C_{li}\geq0$ for all $l\in\outneighbor{\mathbf{C},i}$;\\
			Each agent $i$ initializes with any arbitrary $x_{i,0},s_{i,0}\in\R^p$;\\
			\textbf{for} $k=0,1,\cdots$, \textbf{do} \\
			\quad for each $i\in\mathcal{N}$,\\
			\qquad agent $i$ pushes (noisy) $C_{li}s_{i,k}$ to each $l\in\outneighbor{\mathbf{C},i}$;\\
			\qquad agent $i$ pulls (noisy) $x_{j,k}$ from each $j\in\inneighbor{\mathbf{R},i}$;\\
			\quad for each $i\in\mathcal{N}$,\\
			\parbox{3cm}{\begin{align*}
				\begin{array}{ll}
					& s_{i,k+1} =  (1-\gamma)s_{i,k} + \gamma\left(\sum_{j=1}^n C_{ij}s_{j,k}+\epsilon_{i,k}\right)\\
				& \qquad\qquad+\nabla f_i(x_{i,k})\\
				& x_{i,k+1} =  (1-\eta)x_{i,k}+\eta\left(\sum_{j=1}^n R_{ij}x_{j,k}+\xi_{i,k}\right)\\
				&\qquad\qquad -\alpha (s_{i,k+1}-s_{i,k})
				\end{array}
				\end{align*}}\\
			\textbf{end for}\\
			\hline
		\end{tabularx}
	\end{table}
	Denote 
	\begin{equation*}
		\mepsilon_k:=[\epsilon_{1,k}, \epsilon_{2,k}, \ldots,\epsilon_{n,k}]^{\T},\quad
		\mxi_k:=[\xi_{1,k}, \xi_{2,k}, \ldots,\xi_{n,k}]^{\T}.
	\end{equation*}
	We make the following standing assumption.
	\begin{assumption}
		\label{asp: noises}
		\sp{Random sequences $\{\mepsilon_k\}$ and $\{\mxi_k\}$ are independent.\footnote{Note that at the same $k\ge 0$, $\epsilon_{i,k}$ (respectively, $\xi_{i,k}$) can be dependent among different agents.}} The matrices $\mepsilon_k$ and $\mxi_k$ have zero mean and bounded variance, i.e., $\bE[\mepsilon_k]=\bE[\mxi_k]=\mathbf{0}$, $\bE[\|\mepsilon_k\|^2]\le \sigma_{\mepsilon}^2$, $\bE[\|\mxi_k\|^2]\le \sigma_{\mxi}^2$ for some $\sigma_{\mepsilon},\sigma_{\mxi}>0$.
	\end{assumption}
	Assumption \ref{asp: noises} holds true, for example, when unbiased quantization is performed \cite{alistarh2017qsgd}.
	
	Denote
	\begin{equation}
	\begin{split}
	& \mC:=[C_{ij}],\, \mC_{\gamma}:=(1-\gamma)\mI+\gamma\mC,\\
	& \mR:=[R_{ij}],\, \mR_{\eta}:=(1-\eta)\mI+\eta\mR.
	\end{split}
	\end{equation}
	We can rewrite Algorithm 1 in the following matrix form:
	\begin{subequations}
		\label{R_pushpull}
		\begin{align}
		&\ms_{k+1} =  \mC_{\gamma}\ms_k+\gamma\mepsilon_k+\nabla F(\mx_k), \label{R_pushpull_s}\\
		&\mx_{k+1} =  \mR_{\eta}\mx_k+\eta\mxi_k-\alpha(\ms_{k+1}-\ms_k). \label{R_pushpull_x}
		\end{align}
	\end{subequations}
	
	The matrices $\mathbf{R}$ and $\mathbf{C}$ and their induced graphs $\mathcal{G}_\mathbf{R}$ and $\mathcal{G}_{\mathbf{C}}$ (respectively) satisfy the same conditions as for Push-Pull \cite{pu2020push}.
	\begin{assumption}
		\label{asp: stochastic}
		The matrix $\mathbf{R}\in\mathbb{R}^{n\times n}$ is nonnegative row-stochastic and $\mathbf{C}\in\mathbb{R}^{n\times n}$ is nonnegative column-stochastic, i.e., $\mathbf{R}\mathbf{1}=\mathbf{1}$ and $\mathbf{1}^{\T} \mathbf{C}=\mathbf{1}^{\T}$. In addition, the diagonal entries of $\mathbf{R}$ and $\mathbf{C}$ are positive. The graphs $\mathcal{G}_\mathbf{R}$ and $\mathcal{G}_{\mathbf{C}^{\T}}$ each contain at least one spanning tree. Moreover, there exists at least one node that is a root of spanning trees for both $\mathcal{G}_\mathbf{R}$ and $\mathcal{G}_{\mathbf{C}^{\T}}$.
	\end{assumption}

    The readers are referred to Section II of \cite{pu2020push} for the motivation of Assumption \ref{asp: stochastic}, which differs from most existing works on the assumption of network topology. As a result, R-Push-Pull is flexible in network design and unifies different computational architectures, including (semi-)centralized and decentralized architecture.

\begin{lemma}
	\label{lem: eigenvectors u v}
	Under Assumption~\ref{asp: stochastic}, the matrix $\mR$ has a nonnegative left eigenvector $u^{\T}$ (w.r.t.\ eigenvalue $1$) with $u^{\T}\mathbf{1}=n$, and the matrix $\mC$ has a nonnegative right eigenvector $v$ (w.r.t. eigenvalue $1$) with $\mathbf{1}^{\T}v=n$ (see \cite{horn1990matrix}).
\end{lemma}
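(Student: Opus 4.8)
The plan is to handle $\mR$ and $\mC$ separately but symmetrically, reducing each claim to an application of the Perron--Frobenius theorem for (not necessarily irreducible) nonnegative matrices. The first step is to identify $1$ as both an eigenvalue and the spectral radius of each matrix. Since $\mR$ is row-stochastic, $\mR\one=\one$, so $\one$ is a right eigenvector of $\mR$ for eigenvalue $1$; because $\mR$ and $\mR^{\T}$ share the same characteristic polynomial, $1$ is likewise an eigenvalue of $\mR^{\T}$, i.e.\ $\mR$ admits a left eigenvector $u^{\T}$ with $u^{\T}\mR=u^{\T}$. Moreover the spectral radius is bounded by the induced $\infty$-norm (the maximum absolute row sum), $\rho(\mR)\le\|\mR\|_\infty=1$, and since $1$ is already an eigenvalue this forces $\rho(\mR)=1$. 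The same argument applied to the column-stochastic matrix $\mC$, using $\one^{\T}\mC=\one^{\T}$ and the induced $1$-norm $\|\mC\|_1=1$ (the maximum absolute column sum), yields $\rho(\mC)=1$ together with a right eigenvector $v$ satisfying $\mC v=v$.

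The second step upgrades these eigenvectors to nonnegative ones. The matrix $\mR^{\T}$ is entrywise nonnegative with $\rho(\mR^{\T})=\rho(\mR)=1$, so the Perron--Frobenius theorem for nonnegative matrices \cite{horn1990matrix} guarantees a nonzero eigenvector $u\ge\mathbf{0}$ associated with the spectral radius, i.e.\ $\mR^{\T}u=u$, which is precisely a nonnegative left eigenvector of $\mR$ for eigenvalue $1$. Applying the same theorem to $\mC\ge\mathbf{0}$ with $\rho(\mC)=1$ produces a nonzero $v\ge\mathbf{0}$ with $\mC v=v$. Note that the spanning-tree and positive-diagonal portions of Assumption~\ref{asp: stochastic} are not needed for this lemma; they serve only to make eigenvalue $1$ simple, hence $u$ and $v$ unique up to scaling, in the subsequent analysis.

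The final step is normalization. Because $u\ge\mathbf{0}$ and $u\neq\mathbf{0}$, the sum $u^{\T}\one=\sum_i u_i$ is strictly positive, so replacing $u$ by $(n/u^{\T}\one)\,u$ preserves nonnegativity and achieves $u^{\T}\one=n$; the identical rescaling of $v$ by $n/\one^{\T}v$ gives $\one^{\T}v=n$. The only genuinely nontrivial ingredient is the Perron--Frobenius step, and the point requiring care is matching the nonnegative Perron eigenvector to eigenvalue $1$ specifically---this is what the first step secures by identifying $1$ as the spectral radius, to which Perron--Frobenius attaches its nonnegative eigenvector.
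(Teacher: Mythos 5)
Your proof is correct and is precisely the standard Perron--Frobenius argument that the paper itself invokes by citing \cite{horn1990matrix} without writing out a proof: identify $1$ as the spectral radius via the row/column-sum norm bounds, apply the Perron--Frobenius theorem for general nonnegative matrices to $\mR^{\T}$ and $\mC$ to obtain nonnegative eigenvectors, and normalize. Your side remark is also accurate --- the spanning-tree and positive-diagonal conditions are not needed for existence, only for simplicity of the eigenvalue and for guaranteeing $u^{\T}v>0$ later.
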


   \subsection{Algorithm Development}
   
   To motivate the development of Algorithm 1, we first take a look at a variant of the Push-Pull algorithm in its matrix form (without noise):
   \begin{subequations}
   	\label{Push_Pull}
   	\begin{align}
   	&\mx_{k+1} =  \mathbf{R}\mx_k-\alpha\my_k,\\
   	& \my_{k+1} =  \mathbf{C}\my_k+\nabla F(\mx_{k+1})-\nabla F(\mx_k),
   	\end{align}
   \end{subequations}
   where $\mx_0$ is arbitrary and $\my_0=\nabla F(\mx_0)$. The matrices $\mR$ and $\mC$ satisfy Assumption \ref{asp: stochastic}. As a result of $\mathbf{C}$ being column-stochastic,  we have by induction that
   \begin{equation}
   \label{oy_k_tracking}
   \mathbf{1}^{\T}\my_k = \mathbf{1}^{\T}\nabla F(\mx_{k}), \qquad \forall k.
   \end{equation}
   Relation (\ref{oy_k_tracking}) is critical for (a subset of) the agents to track the average gradient $\mathbf{1}^{\T}\nabla F(\mx_{k})/n$ through the $\my$-update. However, this gradient tracking property is not robust. For example, if $y_{i,0}$ are not properly initialized such that $\mathbf{1}^{\T}\my_0\neq \mathbf{1}^{\T}\nabla F(\mx_0)$, then relation (\ref{oy_k_tracking}) will not hold for all $k>0$.\footnote{In fact, if $\mathbf{1}^{\T}\my_K\neq\mathbf{1}^{\T}\nabla F(\mx_{K})$  for any $K>0$, then relation (\ref{oy_k_tracking}) will not hold for all $k>K$.} Under the scenario of noisy information exchange, instead of relation (\ref{oy_k_tracking}), we have
   \begin{equation*}
   \mathbf{1}^{\T}\my_k = \mathbf{1}^{\T}\nabla F(\mx_{k})+\sum_{l=0}^{k-1}\mxi_l, \; \forall k,
   \end{equation*}
   which suggests that the gradient tracking will incur noise whose variance goes to infinity as $k$ grows.
   
   We remark here that although Push-Pull is not robust to noisy information exchange, it works well with stochastic gradient information under noiseless communication \cite{xin2019distributed2,pu2020distributed}. This is due to the fact that
   \begin{equation}
   \label{oy_k_stochastic}
   \mathbf{1}^{\T}\my_k = \mathbf{1}^{\T}G(\mx_{k}), \; \forall k,
   \end{equation}
   where $G(\mx_{k})$ stands for an unbiased estimate of $\nabla F(\mx_k)$. As a result, the (stochastic) gradient tracking is still effective given that $G(\mx_{k})$ has bounded variance. We will see below that after a variable transformation, R-Push-Pull resembles Push-Pull while ensuring gradient tracking in the form of (\ref{oy_k_stochastic}).
   
   Let $\my_k:=\ms_{k+1}-\ms_k,\; \forall k$. We have
   \begin{subequations}
   	\label{R_pushpull_transform}
   	\begin{align}
   	&\mx_{k+1} =  \mR_{\eta} \mx_k+\eta\mxi_k-\alpha\my_k, 	\label{R_pushpull_transform_x}\\
   	& \my_{k+1} =  \mathbf{C}_{\gamma}\my_k+\tilde{\nabla} F(\mx_{k+1})-\tilde{\nabla} F(\mx_k), \label{R_pushpull_transform_y}
   	\end{align}
   \end{subequations}
   where 
\begin{equation*}
\tilde{\nabla}F(\mx_k):= \nabla F(\mx_k)+\gamma\mepsilon_k,\; \forall k.
\end{equation*}
   This is exactly the Push-Pull update (\ref{Push_Pull}) if we ignore the noise and let $\gamma=\eta=1$.
   
   To see why R-Push-Pull is robust in gradient tracking, note that from (\ref{R_pushpull_s}), we have by induction that 
   \begin{equation*}
   \mathbf{1}^{\T}\ms_k = \mathbf{1}^{\T}\ms_0 + \sum_{l=0}^{k-1}\mathbf{1}^{\T}\tilde{\nabla} F(\mx_{k}), \; \forall k.
   \end{equation*}
   Hence $\ms_k$ tracks the aggregated gradients of the network over the history. Then from the definition of $\my_k$, we have 
   	\begin{equation*}
   	\my_0 = \ms_1-\ms_0=\mC_{\gamma} \ms_0+\tilde{\nabla}F(\mx_0)-\ms_0,
   	\end{equation*}
   	indicating that $\mathbf{1}^{\T}\my_0=\mathbf{1}^{\T}\tilde{\nabla}F(\mx_0)$
   	regardless of the initial choice $\ms_0$. In addition,
   	\begin{equation}
   	\label{y_k_sum}
   	\mathbf{1}^{\T}\my_k = \mathbf{1}^{\T}\ms_{k+1}-\mathbf{1}^{\T}\ms_k = \mathbf{1}^{\T}\tilde{\nabla} F(\mx_{k}), \; \forall k,
   	\end{equation}
   	whose variance is bounded just like in equation (\ref{oy_k_stochastic}).
   	
   	It is worth noting that the robust gradient tracking technique employed by R-Push-Pull, i.e., using $\ms_k$ and $\ms_{k+1}-\ms_k$ to track the aggregated history gradients and the average gradient at step $k$ respectively, can also be applied to other gradient tracking based methods such as Push-DIGing/ADDOPT \cite{nedic2017achieving,xi2018add}.
   	
	
	
	\section{Convergence Analysis}
	\label{sec: analysis}
	
	In this section, we study the convergence properties of R-Push-Pull.
	We first define the following variables:
	\begin{eqnarray*}
		\ox_k  :=  \frac{1}{n}u^{\T} \mx_k,\ \
		\oy_k  :=  \frac{1}{n}\mathbf{1}^{\T}\my_k.
	\end{eqnarray*}
	Our strategy is to bound $\bE[\|\ox_{k+1}-x^*\|_2^2]$, $\bE[\|\mx_{k+1}-\mathbf{1}\ox_{k+1}\|_{\mrR}^2]$ and $\bE[\|\my_{k+1}-v\oy_{k+1}\|_{\mrC}^2]$ in terms of linear combinations of their previous values, where $\|\cdot\|_{\mrR}$ and $\|\cdot\|_{\mrC}$ are specific norms to be defined later. In this way we establish a linear system of inequalities which allows us to derive the convergence results. The proof technique is similar to that of \cite{xin2019distributed,pu2020push} and was inspired by earlier works \cite{qu2017harnessing,xi2018linear}.
	
	\subsection{Preliminary Analysis}
	
	From relation (\ref{R_pushpull_transform_x}) and Lemma \ref{lem: eigenvectors u v}, we have
	\begin{equation}
	\label{ox_k+1 pre}
	\ox_{k+1} = \frac{1}{n}u^{\T} (\mR_{\eta}\mx_k+\eta\mxi_k-\alpha \my_k)=\ox_k-\frac{\alpha}{n}u^{\T}\my_k+\frac{\eta}{n}u^{\T}\mxi_k.
	\end{equation}
	By relation \eqref{y_k_sum},
	\begin{equation}
	\label{oy_ave}
	\oy_k = \frac{1}{n}\mathbf{1}^{\T}\tilde{\nabla} F(\mx_{k}), \ \ \forall k.
	\end{equation}
	Let us further define 
	\begin{align*}
	h_k := \frac{1}{n}\mathbf{1}^{\T}\nabla F(\mx_k)=\frac{1}{n}\sum_{i=1}^n f_i(x_{i,k}),\\
	g_k := \frac{1}{n}\mathbf{1}^{\T}\nabla F(\mathbf{1}\ox_k) = \frac{1}{n}\sum_{i=1}^n f_i(\bar{x}_k).
	\end{align*}
	Clearly, $\oy_k=h_k+\frac{\gamma}{n}\mathbf{1}^{\T}\mepsilon_k$.
	Then, we obtain from relation (\ref{ox_k+1 pre}) that
	\begin{equation}
		\label{ox pre}
		\begin{array}{l}
		\ox_{k+1} = \ox_k-\frac{\alpha}{n}u^{\T}\left(\my_k-v\oy_k+v\oy_k\right)+\frac{\eta}{n}u^{\T}\mxi_k\\
		= \ox_k-\frac{\alpha}{n}u^{\T}v\oy_k-\frac{\alpha}{n}u^{\T}\left(\my_k-v\oy_k\right)+\frac{\eta}{n}u^{\T}\mxi_k\\
		= \ox_k-\ta g_k-\ta (h_k-g_k)-\frac{\alpha}{n}u^{\T}\left(\my_k-v\oy_k\right)\\
		\quad -\ta\frac{\gamma}{n}\mathbf{1}^{\T}\mepsilon_k+\frac{\eta}{n}u^{\T}\mxi_k,
		\end{array}
	\end{equation}
	where 
	\begin{equation}
	\label{alpha'}
	\ta :=\frac{\alpha}{n} u^{\T}v.\footnote{Assumption \ref{asp: stochastic} ensures $\ta >0$ (see \cite{pu2018distributed}).}
	\end{equation}
	
	In view of \eqref{R_pushpull_transform_x} and Lemma \ref{lem: eigenvectors u v}, using \eqref{ox_k+1 pre} we have
	\begin{equation}
	\label{mx-ox pre}
	\begin{array}{ll}
	\mx_{k+1}-\mathbf{1}\ox_{k+1} \\
	= \mR_{\eta}\mx_k+\eta\mxi_k-\alpha \my_k-\mathbf{1}\ox_k+\alpha\frac{\mathbf{1}u^{\T}}{n}\my_k-\frac{\mathbf{\eta}u^{\T}}{n}\mxi_k\\
	=  \mR_{\eta}(\mx_k-\mathbf{1}\ox_k)-\alpha\left(\mI-\frac{\mathbf{1}u^{\T}}{n}\right)\my_k+\eta(\mI-\frac{\mathbf{1}u^{\T}}{n})\mxi_k\\
	=  \left(\mR_{\eta}-\frac{\mathbf{1}u^{\T}}{n}\right)(\mx_k-\mathbf{1}\ox_k)-\alpha\left(\mI-\frac{\mathbf{1}u^{\T}}{n}\right)\my_k\\
	\quad +\eta(\mI-\frac{\mathbf{1}u^{\T}}{n})\mxi_k,
	\end{array}
	\end{equation}
	and from \eqref{R_pushpull_transform_y} and \eqref{oy_ave} we obtain
	\begin{equation}
	\label{my-oy pre}
	\begin{array}{ll}
	\my_{k+1}-v\oy_{k+1} \\
	= \mC_{\gamma}\my_k+\tilde{\nabla} F(\mx_{k+1})-\tilde{\nabla} F(\mx_k)-v(\oy_{k+1}-\oy_k)-v\oy_k\\
	= \mC_{\gamma}\my_k-v\oy_k+\left(\mI-\frac{v\mathbf{1}^{\T}}{n}\right)\left(\tilde{\nabla} F(\mx_{k+1})-\tilde{\nabla} F(\mx_k)\right)\\
	=\left(\mC_{\gamma}-\frac{v\mathbf{1}^{\T}}{n}\right)(\my_k-v\oy_k)\\
	\quad +\left(\mI-\frac{v\mathbf{1}^{\T}}{n}\right)\left(\tilde{\nabla} F(\mx_{k+1})-\tilde{\nabla} F(\mx_k)\right).
	\end{array}
	\end{equation}
	
	Denote by $\mathcal{F}_k$ the $\sigma$-algebra generated by $\{\mepsilon_0,\mxi_0\ldots,\mepsilon_{k-1},\mxi_{k-1}\}$, and define $\bE[\cdot \mid\mathcal{F}_k]$ as the conditional expectation given $\mathcal{F}_k$. We prepare a few useful supporting lemmas for our further analysis, deferred to Appendix \ref{subsec: supporting lemmas}.

	\subsection{Main Results}
	The following lemma establishes a linear system of inequalities that bound $\bE[\|\ox_{k+1}-x^*\|_2^2]$, $\bE[\|\mx_{k+1}-\mathbf{1}\ox_k\|_{\mrR}^2]$ and $\bE[\|\my_{k+1}-v\oy_k\|_{\mrC}^2]$.
	\begin{lemma}
		\label{lem: important inequalities}
		Under Assumptions \ref{asp; strconvex Lipschitz}-\ref{asp: stochastic}, when $\ta \le 1/(\mu+L)$, we have the following linear system of inequalities:
		\begin{equation}
		\label{main ineqalities}
		\begin{bmatrix}
		\bE[\|\ox_{k+1}-x^*\|_2^2]\\
		\bE[\|\mx_{k+1}-\mathbf{1}\ox_{k+1}\|_{\mrR}^2]\\
		\bE[\|\my_{k+1}-v\oy_{k+1}\|_{\mrC}^2]
		\end{bmatrix}
		\le
		\mA
		\begin{bmatrix}
		\bE[\|\ox_k-x^*\|_2^2]\\
		\bE[\|\mx_k-\mathbf{1}\ox_k\|_{\mrR}^2]\\
		\bE[\|\my_k-v\oy_k\|_{\mrC}^2]
		\end{bmatrix}+\mB,
		\end{equation}
		where the inequality is to be taken component-wise. The transition matrix $\mA=[a_{ij}]$ and the vector $\mB$ are given by:
		\begin{equation}
		\mA = \begin{bmatrix}
		1-\ta \mu & c_1\alpha & c_2\alpha\\
		c_5\alpha^2 & \frac{1+\tau_{\mrR}^2}{2}+c_6\alpha^2 & c_7\alpha^2\\
		c_{10}\alpha^2 & c_{11} & \frac{1+\tau_{\mrC}^2}{2}+c_{12}\alpha^2
		\end{bmatrix}
		\end{equation}
		and
		\begin{equation}
		\mB=\begin{bmatrix}
		c_3\alpha^2\\
		c_8\alpha^2\\
		c_{13}
		\end{bmatrix}
		\gamma^2\sigma_{\mepsilon}^2
		+
		\begin{bmatrix}
		c_4\\
		c_9\\
		c_{14}
		\end{bmatrix}
		\eta^2\sigma_{\mxi}^2,
		\end{equation}
		respectively, where constants $c_1-c_{13}$ are defined in (\ref{c1-c4}), \eqref{c5-c9}, and \eqref{c10-c13}.
	\end{lemma}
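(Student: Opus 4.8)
The plan is to bound the three tracked quantities one at a time, starting from the three exact recursions \eqref{ox pre}, \eqref{mx-ox pre}, and \eqref{my-oy pre} already derived, and then to assemble the resulting scalar inequalities into the matrix form \eqref{main ineqalities}. In every case the recipe is the same: condition on $\mathcal{F}_k$ so that the zero-mean noise terms $\mepsilon_k,\mxi_k$ (and, for the third quantity, $\mepsilon_{k+1}$) drop out of the cross terms by Assumption~\ref{asp: noises}, bound their squared norms by the variance budgets $\sigma_{\mepsilon}^2,\sigma_{\mxi}^2$, split the remaining deterministic cross terms with Young's inequality, and finally take total expectation. Throughout I would invoke the contraction estimates for $\mR_\eta-\frac{\mathbf{1}u^\T}{n}$ and $\mC_\gamma-\frac{v\mathbf{1}^\T}{n}$ in the norms $\|\cdot\|_{\mrR}$ and $\|\cdot\|_{\mrC}$ (with rates $\tau_{\mrR},\tau_{\mrC}<1$), together with the Lipschitz bound $\|\nabla F(\mx)-\nabla F(\mathbf{1}\ox)\|\le L\|\mx-\mathbf{1}\ox\|$ and a norm-equivalence step that passes between $\|\cdot\|_{\mrR},\|\cdot\|_{\mrC}$ and the Frobenius-type norm of Definition~\ref{def: norm n p} in which the variances are stated; these are exactly the ingredients I expect the deferred supporting lemmas to supply.

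For the first row I would start from \eqref{ox pre} and read $\ox_k-\ta g_k$ as one gradient-descent step on the average objective $\barf$ with stepsize $\ta$. Since $g_k=\nabla\barf(\ox_k)$ and $\barf$ is $\mu$-strongly convex with $L$-Lipschitz gradient, the hypothesis $\ta\le 1/(\mu+L)$ gives the classical contraction $\|\ox_k-\ta g_k-x^*\|_2\le(1-\ta\mu)\|\ox_k-x^*\|_2$. Expanding $\|\ox_{k+1}-x^*\|_2^2$ around this point and absorbing $(1-\ta\mu)^2$ back to $1-\ta\mu$ through a Young split with parameter $\propto\ta\mu$ produces the diagonal entry $a_{11}=1-\ta\mu$; the leftover cross terms with $h_k-g_k$ and with $\my_k-v\oy_k$, controlled by the Lipschitz estimate, yield $a_{12}=c_1\alpha$ and $a_{13}=c_2\alpha$ (the single power of $\alpha$ entering through $\ta\propto\alpha$), while the squared noise contributions of $\ta\frac{\gamma}{n}\mathbf{1}^\T\mepsilon_k$ and $\frac{\eta}{n}u^\T\mxi_k$ furnish the $c_3\alpha^2\gamma^2\sigma_{\mepsilon}^2$ and $c_4\eta^2\sigma_{\mxi}^2$ entries of $\mB$.

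For the second and third rows I would apply the two contraction lemmas to \eqref{mx-ox pre} and \eqref{my-oy pre}, in each case turning the contracted self-term $\tau^2$ into $\tfrac{1+\tau^2}{2}$ by the usual Young split (which leaves the cross coefficient a harmless $\alpha$-independent constant). The $\alpha\my_k$ term in \eqref{mx-ox pre} I would handle by writing $\my_k=(\my_k-v\oy_k)+v\oy_k$ and bounding $\|v\oy_k\|$ via $\oy_k=h_k+\frac{\gamma}{n}\mathbf{1}^\T\mepsilon_k$ and $\|g_k\|_2\le L\|\ox_k-x^*\|_2$ in terms of the optimality gap, consensus error, and noise; squaring the $\alpha$ prefactor is what pins every off-diagonal and perturbation entry of row two at order $\alpha^2$ ($a_{21}=c_5\alpha^2$, $a_{22}=\tfrac{1+\tau_{\mrR}^2}{2}+c_6\alpha^2$, $a_{23}=c_7\alpha^2$), with the noise entries of $\mB$ arising analogously. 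Row three is the same in spirit but needs the increment $\tilde{\nabla}F(\mx_{k+1})-\tilde{\nabla}F(\mx_k)$, which I would split into $\nabla F(\mx_{k+1})-\nabla F(\mx_k)$ and $\gamma(\mepsilon_{k+1}-\mepsilon_k)$; the former is $L$-Lipschitz in $\mx_{k+1}-\mx_k$, and from \eqref{R_pushpull_transform_x} with $\mR_\eta\mathbf{1}=\mathbf{1}$ one has $\mx_{k+1}-\mx_k=(\mR_\eta-\mI)(\mx_k-\mathbf{1}\ox_k)-\alpha\my_k+\eta\mxi_k$. Crucially the consensus term $(\mR_\eta-\mI)(\mx_k-\mathbf{1}\ox_k)$ carries \emph{no} factor of $\alpha$, which is precisely why $a_{32}=c_{11}$ is an $O(1)$ constant whereas $a_{31}$ and $a_{33}-\tfrac{1+\tau_{\mrC}^2}{2}$ stay $O(\alpha^2)$.

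The main obstacle I anticipate is row three, namely controlling $\bE\|\mx_{k+1}-\mx_k\|^2$ and the two-time-step noise in $\tilde{\nabla}F(\mx_{k+1})-\tilde{\nabla}F(\mx_k)$ without destroying this order structure. The delicate points are that $\mx_{k+1}$ is correlated with $\mepsilon_k$ (it enters through $\my_k$), so the cross term between the Lipschitz part and $\gamma\mepsilon_k$ must be split by Young's inequality in a way that keeps the $\mepsilon$-contribution inside the $\gamma^2\sigma_{\mepsilon}^2$ budget while charging the rest to the already-present optimality, consensus, and tracking terms; and that $\mepsilon_{k+1}$ is independent of $\mathcal{F}_{k+1}$ and of $\mx_{k+1}$, so its cross terms vanish in expectation and it contributes only through $\bE\|\mepsilon_{k+1}-\mepsilon_k\|^2\le 2\sigma_{\mepsilon}^2$. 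Keeping the bookkeeping of these noise terms consistent across the two conditioning levels $\mathcal{F}_k$ and $\mathcal{F}_{k+1}$, and verifying that every perturbation of the contractive diagonal is genuinely $O(\alpha^2)$ except for the structural $c_{11}$, is where the real work lies; once these estimates are in hand, collecting coefficients yields the linear system \eqref{main ineqalities}.
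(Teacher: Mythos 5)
Your proposal matches the paper's proof in all essentials: the same three recursions \eqref{ox pre}, \eqref{mx-ox pre}, \eqref{my-oy pre}, the same contraction $\|\ox_k-\ta g_k-x^*\|_2\le(1-\ta\mu)\|\ox_k-x^*\|_2$ with the Young split that turns $(1-\ta\mu)^2$ into $1-\ta\mu$ and $\tau^2$ into $\tfrac{1+\tau^2}{2}$, the same decomposition $\my_k=(\my_k-v\oy_k)+v\oy_k$ and $\mx_{k+1}-\mx_k=(\mR_\eta-\mI)(\mx_k-\mathbf{1}\ox_k)-\alpha\my_k+\eta\mxi_k$ explaining why $a_{32}$ is $O(1)$, and the same identification of the delicate cross term $\bE[\langle\nabla F(\mx_{k+1}),-\gamma\mepsilon_k\rangle\mid\mathcal{F}_k]$ arising from the correlation of $\mx_{k+1}$ with $\mepsilon_k$ (which the paper dispatches via a bound of the form $\alpha\gamma^2L\sigma_{\mepsilon}^2$ borrowed from an earlier work, whereas your Young split would merely inflate the constants without changing the order structure). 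The argument is correct and follows the paper's route.
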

	\begin{proof}
		See Appendix \ref{proof lem: important inequalities}.
	\end{proof}
		The following theorem established the convergence properties for R-Push-Pull in \eqref{R_pushpull}.
	\begin{theorem}
		\label{Theorem1}
		Suppose Assumptions \ref{asp; strconvex Lipschitz}-\ref{asp: stochastic} hold and 
		the stepsize $\alpha$ satisfies
		\begin{equation}
		\label{alpha_ultimate_bound}
		\alpha\le \min\left\{\sqrt{\frac{1-\tau_{\mrR}^2}{6c_6}},\sqrt{\frac{1-\tau_{\mrC}^2}{6c_{12}}},\sqrt{\frac{2d_3}{d_2+\sqrt{d_2^2+4d_1d_3}}}\right\},
		\end{equation}
		where $d_1-d_3$ are defined in \eqref{d1-d3}.
		Then $\sup_{l\ge k}\bE[\|\ox_l-x^*\|_2^2]$ and $\sup_{l\ge k}\bE[\|\mx_{l}-\mathbf{1}\ox_{l}\|_{\mrR}^2]$, respectively, converge to $\limsup_{k\rightarrow\infty}\bE[\|\ox_k-x^*\|_2^2]$ and $\limsup_{k\rightarrow\infty}\bE[\|\mx_k-\mathbf{1}\ox_k\|_{\mrR}^2]$ at the linear rate $\mathcal{O}(\rho(\mA)^k)$, where $\rho(\mA)<1$ is the spectral radius of the matrix $\mA$. Furthermore,
		\begin{equation}
		\label{limsup_pre}
		\begin{split}
		& \limsup_{k\rightarrow\infty}\bE[\|\ox_k-x^*\|_2^2] \le [(\mI-\mA)^{-1}\mB]_1,\\
		& \limsup_{k\rightarrow\infty}\bE[\|\mx_k-\mathbf{1}\ox_k\|_{\mrR}^2] \le [(\mI-\mA)^{-1}\mB]_2,
		\end{split}
		\end{equation}
		where $[(\mI-\mA)^{-1}\mB]_j$ denotes the $j$-th element of the vector $[(\mI-\mA)^{-1}\mB]$. Their specific forms are given in (\ref{opt_error_bound}) and (\ref{consensus_error_bound}) respectively.
	\end{theorem}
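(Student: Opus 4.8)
The plan is to read Lemma~\ref{lem: important inequalities} as a black box that supplies the componentwise linear recursion
\begin{equation*}
\mathbf{z}_{k+1}\le \mA\,\mathbf{z}_k+\mB,\qquad \mathbf{z}_k:=\big[\,\bE[\|\ox_k-x^*\|_2^2],\ \bE[\|\mx_k-\mathbf{1}\ox_k\|_{\mrR}^2],\ \bE[\|\my_k-v\oy_k\|_{\mrC}^2]\,\big]^{\T},
\end{equation*}
in which both $\mA$ and $\mB$ are entrywise nonnegative. Everything reduces to proving $\rho(\mA)<1$ under~\eqref{alpha_ultimate_bound}; granting that, the remaining claims follow by standard nonnegative-recursion arguments. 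Indeed, $\rho(\mA)<1$ makes $(\mI-\mA)^{-1}=\sum_{j\ge0}\mA^j$ exist and be nonnegative, so the affine map has the unique nonnegative fixed point $\mathbf{z}^\star:=(\mI-\mA)^{-1}\mB$. Subtracting $\mathbf{z}^\star=\mA\mathbf{z}^\star+\mB$ from the recursion and using $\mA\ge0$ gives $\mathbf{z}_k-\mathbf{z}^\star\le \mA^k(\mathbf{z}_0-\mathbf{z}^\star)$ entrywise; since $\mA^k=\mathcal{O}(\rho(\mA)^k)$, the first two coordinates of $\mathbf{z}_k$ approach those of $\mathbf{z}^\star$ inside a geometrically shrinking envelope, which simultaneously yields the $\mathcal{O}(\rho(\mA)^k)$ rate for $\sup_{l\ge k}$ and the limiting bounds~\eqref{limsup_pre}; writing out $[(\mI-\mA)^{-1}\mB]_1$ and $[(\mI-\mA)^{-1}\mB]_2$ by Cramer's rule produces their explicit forms.

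The core is therefore $\rho(\mA)<1$. Since $\mA\ge0$, I would use the fact that $\rho(\mA)<1$ iff there is a strictly positive vector $\mathbf{w}=[w_1,w_2,w_3]^{\T}$ with $\mA\mathbf{w}<\mathbf{w}$ componentwise (equivalently, $\mI-\mA$ is a nonsingular M-matrix). Substituting $\ta=\tfrac{\alpha}{n}u^{\T}v$, the first inequality reduces to the $\alpha$-free requirement $c_1w_2+c_2w_3<\tfrac{u^{\T}v}{n}\mu\,w_1$, the second to $\alpha^2(c_5w_1+c_6w_2+c_7w_3)<\tfrac{1-\tau_{\mrR}^2}{2}w_2$, and the third to $c_{11}w_2+\alpha^2(c_{10}w_1+c_{12}w_3)<\tfrac{1-\tau_{\mrC}^2}{2}w_3$. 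Because the coupling $c_{11}$ carries no factor of $\alpha$, the third inequality forces $w_3$ to dominate $w_2$; so I would fix $w_2$, take $w_3>\tfrac{2c_{11}}{1-\tau_{\mrC}^2}w_2$, then choose $w_1$ large enough for the first inequality, after which the leftover $\alpha^2$ terms in the second and third inequalities impose exactly the upper bounds on $\alpha$ collected in~\eqref{alpha_ultimate_bound}.

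An equivalent and more transparent route matches the three terms of~\eqref{alpha_ultimate_bound} one by one. The first two, $\sqrt{(1-\tau_{\mrR}^2)/(6c_6)}$ and $\sqrt{(1-\tau_{\mrC}^2)/(6c_{12})}$, are precisely the bounds that keep the diagonal entries $a_{22}=\tfrac{1+\tau_{\mrR}^2}{2}+c_6\alpha^2$ and $a_{33}=\tfrac{1+\tau_{\mrC}^2}{2}+c_{12}\alpha^2$ below $1$ with margin (they force $a_{22}\le\tfrac{2+\tau_{\mrR}^2}{3}$ and likewise for $a_{33}$). Expanding $\det(\mI-\mA)$, using $1-a_{11}=\ta\mu$, and collecting powers of $\alpha$ then reduces the M-matrix requirement to a quadratic in $t=\alpha^2$ of the form $d_1t^2+d_2t-d_3\le0$; its positive root equals $\tfrac{2d_3}{d_2+\sqrt{d_2^2+4d_1d_3}}$, which is exactly the third term of~\eqref{alpha_ultimate_bound}.

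The main obstacle is this determinant bookkeeping: one must check that the $\alpha^{-1}$ growth hidden in $1/(\ta\mu)$ does not overwhelm the small $\mathcal{O}(\alpha)$ and $\mathcal{O}(\alpha^2)$ off-diagonal couplings, confirm that the Perron root of $\mA$ (and not merely $\det(\mI-\mA)>0$) is controlled, and verify that $\det(\mI-\mA)$ genuinely collapses to the stated quadratic whose positive root is the third bound. All the consequences downstream of $\rho(\mA)<1$ are routine.
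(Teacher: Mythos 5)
Your proposal is correct and follows essentially the same route as the paper: reduce everything to $\rho(\mA)<1$, certify this via the determinant test for nonnegative matrices with sub-unit diagonal (the paper's Lemma~\ref{lem: rho_M}, equivalent to your M-matrix criterion), obtain the first two stepsize bounds from $1-a_{22}\ge\tfrac{1}{3}(1-\tau_{\mrR}^2)$ and $1-a_{33}\ge\tfrac{1}{3}(1-\tau_{\mrC}^2)$, and collapse $\det(\mI-\mA)>0$ to the quadratic $d_1\alpha^4+d_2\alpha^2-d_3<0$ whose positive root gives the third bound, with the limsup estimates \eqref{limsup_pre} read off from $(\mI-\mA)^{-1}\mB$ by Cramer's rule. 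Your alternative positive-test-vector argument is a valid equivalent certificate of $\rho(\mA)<1$ but, as you note, the determinant bookkeeping is what reproduces the exact constants in \eqref{alpha_ultimate_bound}.
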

	\begin{proof}
		In light of Lemma \ref{lem: important inequalities}, by induction we have
		{\small\begin{eqnarray}
			\label{linear_system_bound}
			\begin{bmatrix}
			\bE[\|\ox_{k}-x^*\|_2^2]\\
			\bE[\|\mx_{k}-\mathbf{1}\ox_{k}\|_{\mrR}^2]\\
			\bE[\|\my_{k}-v\oy_{k}\|_{\mrC}^2]
			\end{bmatrix}
			\le 
			\mA^k\begin{bmatrix}
			\bE[\|\ox_{0}-x^*\|_2^2]\\
			\bE[\|\mx_{0}-\mathbf{1}\ox_{0}\|_{\mrR}^2]\\
			\bE[\|\my_{0}-v\oy_{0}\|_{\mrC}^2]
			\end{bmatrix}+\sum_{l=0}^{k-1}\mA^l\mB.
			\end{eqnarray}}\normalsize
		If the spectral radius of $\mA$ satisfies $\rho(\mA)<1$, then $\mA^k$ converges to $\mathbf{0}$ at the linear rate $\mathcal{O}(\rho(\mA)^k)$  (see \cite{horn1990matrix}), in which case $\sup_{l\ge k}\bE[\|\ox_l-x^*\|^2]$, $\sup_{l\ge k}\bE[\|\mx_l-\mathbf{1}\ox_l\|^2]$ and $\sup_{l\ge k}\bE[\|\my_l-\mathbf{1}\oy_l\|^2]$ all converge to a neighborhood of $0$ at the linear rate $\mathcal{O}(\rho(\mA)^k)$. 
		
		The next lemma provides conditions that ensure $\rho(\mA)<1$.
		\begin{lemma}
			\label{lem: rho_M}
			(Lemma 5 in \cite{pu2020distributed}) Given a nonnegative, irreducible matrix $\mM=[m_{ij}]\in\mathbb{R}^{3\times 3}$ with $m_{11},m_{22},m_{33}<\lambda^*$ for some $\lambda^*>0$. A necessary and sufficient condition for $\rho(\mM)<\lambda^*$ is $\text{det}(\lambda^* \mI-\mM)>0$.
		\end{lemma}
	
		In light of Lemma \ref{lem: rho_M}, it suffices to ensure $a_{11},a_{22},a_{33}<1$ and $\text{det}(\mI-\mA)>0$, or more aggressively,
		{\begin{equation}
			\label{|I-A|>0 pre}
			\begin{split}
			& \text{det}(\mI-\mA)=(1-a_{11})(1-a_{22})(1-a_{33})-a_{12}a_{23}a_{31}\\
			& \quad -a_{13}a_{21}a_{32}-(1-a_{22})a_{13}a_{31}-(1-a_{11})a_{23}a_{32}\\
			& \quad -(1-a_{33})a_{12}a_{21}>\frac{1}{2}(1-a_{11})(1-a_{22})(1-a_{33}),
			\end{split}
			\end{equation}}\normalsize
		which is equivalent to
			\begin{equation}
			\label{|I-A|>0}
			\begin{split}
			& \frac{1}{2}(1-a_{11})(1-a_{22})(1-a_{33})-c_1 c_7 c_{10}\alpha^5\\
			& \quad -c_2 c_5 c_{11}\alpha^3-(1-a_{22})c_2c_{10}\alpha^3-(1-a_{11})c_7 c_{11}\alpha^2\\
			& \quad -(1-a_{33})c_1 c_5 \alpha^3>0.
			\end{split}
			\end{equation}\normalsize
		We now provide some sufficient conditions under which $a_{11},a_{22},a_{33}<1$ and (\ref{|I-A|>0}) holds true.
		
		First, $a_{11}<1$ is ensured by choosing $\ta \le 1/(\mu+L)$, and $a_{22},a_{33}<1$ is guaranteed by
		\begin{equation}
		\label{1-a22,a33ge}
		1-a_{22} \ge \frac{1}{3}(1-\tau_{\mrR}^2),\, 1-a_{33} \ge \frac{1}{3}(1-\tau_{\mrC}^2),
		\end{equation}
		which requires
		\begin{equation}
		\label{alpha loose condition}
		\alpha\le \min\left\{\sqrt{\frac{1-\tau_{\mrR}^2}{6c_6}},\sqrt{\frac{1-\tau_{\mrC}^2}{6c_{12}}}\right\}.
		\end{equation}
		
		Second, notice that $a_{22}>\frac{1+\tau_{\mrR}^2}{2}$ and $a_{33}>\frac{1+\tau_{\mrC}^2}{2}$. In light of \eqref{1-a22,a33ge}, a sufficient condition for $\text{det}(\mI-\mA)>0$ is to substitute the first $(1-a_{22})$ (respectively, $(1-a_{33})$) in (\ref{|I-A|>0}) by $(1-\tau_{\mrR}^2)/3$ (respectively, $(1-\tau_{\mrC}^2)/3$), and substitute the second $(1-a_{22})$ (respectively, $(1-a_{33})$) by $(1-\tau_{\mrR}^2)/2$ (respectively, $(1-\tau_{\mrC}^2)/2$). We then have 
		\begin{equation*}
		d_1\alpha^4+d_2\alpha^2-d_3<0,
		\end{equation*}
		where
		\begin{equation}
		\label{d1-d3}
		\begin{split}
		& d_1 := c_1 c_7 c_{10}\\
		& d_2 := c_2 c_5 c_{11}+\frac{(1-\tau_{\mrR}^2)}{2}c_2 c_{10}+\mu u^{\T}v c_7 c_{11}\\
		& \quad +\frac{(1-\tau_{\mrC}^2)}{2}c_1 c_5\\
		& d_3 := \frac{\mu u^{\T}v(1-\tau_{\mrR}^2)(1-\tau_{\mrC}^2)}{18}.
		\end{split}
		\end{equation}
		Hence
		\begin{equation}
		\label{alpha strict condition}
		\alpha^2\le \frac{2d_3}{d_2+\sqrt{d_2^2+4d_1d_3}}.
		\end{equation}
		Relations (\ref{alpha loose condition}) and (\ref{alpha strict condition}) yield the final bound on $\alpha$.
		
		Relation \eqref{limsup_pre} follows from (\ref{linear_system_bound}) directly given that $\rho(\mA)<1$. In light of \eqref{|I-A|>0 pre} and \eqref{1-a22,a33ge}, we obtain from (\ref{linear_system_bound}) that
		{\small\begin{equation}
			\label{opt_error_bound}
			\begin{split}
			&  [(\mI-\mA)^{-1}\mB]_1\\
			& = \left[((1-a_{22})(1-a_{33})-a_{23}a_{32})(c_3\alpha^2\gamma^2\sigma_{\mepsilon}^2+c_4\eta^2\sigma_{\mxi}^2)\right.\\
			& \quad +(a_{13}a_{32}+a_{12}(1-a_{33}))(c_8\alpha^2\gamma^2\sigma_{\mepsilon}^2+c_9\eta^2\sigma_{\mxi}^2))\\
			& \quad \left. +(a_{12}a_{23}+a_{13}(1-a_{22}))(c_{13}\gamma^2\sigma_{\mepsilon}^2+c_{14}\eta^2\sigma_{\mxi}^2))\right]\frac{1}{\text{det}(\mI-\mA)}\\
			& \le \left[\left(\frac{(1-\tau_{\mrR}^2)(1-\tau_{\mrC}^2)}{4}-c_7 c_{11}\alpha^2\right)(c_3\alpha^2\gamma^2\sigma_{\mepsilon}^2+c_4\eta^2\sigma_{\mxi}^2))\right.\\
			& \quad +\left(c_2c_{11}\alpha+\frac{c_1(1-\tau_{\mrC}^2)\alpha}{2}\right)(c_8\alpha^2\gamma^2\sigma_{\mepsilon}^2+c_9\eta^2\sigma_{\mxi}^2))\\
			& \quad \left. +\left(c_1c_7\alpha^3+\frac{c_2(1-\tau_{\mrR}^2)\alpha}{2}\right)(c_{13}\gamma^2\sigma_{\mepsilon}^2+c_{14}\eta^2\sigma_{\mxi}^2))\right]\\
			& \quad \cdot\frac{18}{\ta \mu(1-\tau_{\mrR}^2)(1-\tau_{\mrC}^2)},
			\end{split}
			\end{equation}}\normalsize
		and
		{\small\begin{equation}
			\label{consensus_error_bound}
			\begin{split}
			&  [(\mI-\mA)^{-1}\mB]_2\\
			& =\left[(a_{23}a_{31}+a_{21}(1-a_{33}))(c_3\alpha^2\gamma^2\sigma_{\mepsilon}^2+c_4\eta^2\sigma_{\mxi}^2)\right.\\
			& \quad  +((1-a_{11})(1-a_{33})-a_{13}a_{31})(c_8\alpha^2\gamma^2\sigma_{\mepsilon}^2+c_9\eta^2\sigma_{\mxi}^2)\\
			& \quad \left. +(a_{13}a_{21}+a_{23}(1-a_{11}))(c_{13}\gamma^2\sigma_{\mepsilon}^2+c_{14}\eta^2\sigma_{\mxi}^2)\right]\frac{1}{\text{det}(\mI-\mA)}\\
			& \le \left[\left(c_7c_{10}\alpha^4+c_5\alpha^2\frac{(1-\tau_{\mrC}^2)}{2}\right)(c_3\alpha^2\gamma^2\sigma_{\mepsilon}^2+c_4\eta^2\sigma_{\mxi}^2)\right.\\
			& \quad +\left(\ta \mu\frac{(1-\tau_{\mrC}^2)}{2}-c_2c_{10}\alpha^3\right)(c_8\alpha^2\gamma^2\sigma_{\mepsilon}^2+c_9\eta^2\sigma_{\mxi}^2)\\
			& \quad \left. +\left(c_2 c_5\alpha^3+c_7\alpha^2\ta \mu\right)(c_{13}\gamma^2\sigma_{\mepsilon}^2+c_{14}\eta^2\sigma_{\mxi}^2)\right]\\
			& \quad \cdot\frac{18}{\ta \mu(1-\tau_{\mrR}^2)(1-\tau_{\mrC}^2)}.\\
			\end{split}
			\end{equation}}\normalsize
	\end{proof}
	From Theorem \ref{Theorem1}, if $\sigma_{\mepsilon}=\sigma_{\mxi}=\mathbf{0}$ (no noise), we have $\mB=\mathbf{0}$, then R-Push-Pull converges linearly to the optimal solution $x^*$. Specifically, when $\alpha$ is sufficiently small, it can be shown that the linear rate indicator $\rho(\mA)\simeq 1-\ta \mu$.
	
	The upper bounds in (\ref{opt_error_bound}) and (\ref{consensus_error_bound}) are functions of $\alpha$, $\eta$, $\gamma$ and other problem parameters, and they are decreasing in the variances $\sigma_{\mepsilon}$ and $\sigma_{\mxi}$. It can also be numerically verified that $\limsup_{k\rightarrow\infty}\bE[\|\ox_k-x^*\|_2^2]$
	and $\limsup_{k\rightarrow\infty}\bE[\|\mx_k-\mathbf{1}\ox_k\|_{\mrR}^2]$ decrease in the stepsize $\alpha$.
	Moreover, in order to mitigate the effect of noise $\mepsilon_k$ on the final optimization error, we may take $\eta$ to be in the order of $\mathcal{O}(\alpha)$.

	
	\section{Numerical Example}
	\label{sec: numerical}
	
	We provide a simple illustration example. Consider the Ridge regression problem, i.e.,
	\begin{equation}
	\label{Ridge Regression}
	\min_{x\in \mathbb{R}^{p}}f(x)=\frac{1}{n}\sum_{i=1}^nf_i(x)\left(=\left(u_i^{\T} x-v_i\right)^2+\rho\|x\|^2\right),
	\end{equation}
	where $\rho>0$ is a penalty parameter.
	Each agent $i$ has sample $(u_i,v_i)$ with $u_i\in\mathbb{R}^p$ representing the features and $v_i\in\mathbb{R}$ being the observed outputs. Let each $u_i\in[-1,1]^p$ be generated from uniform distribution, and $v_i$ is drawn according to $v_i=u_i^{\T} \tilde{x}_i+\varepsilon_i$, where parameters $\tilde{x}_i$ are  evenly located in $[0,10]^p$, and $\varepsilon_i\sim\mathcal{N}(0,25)$.\footnote{These randomly generated problem parameters have negligible effects on the simulation results.} Given these predetermined parameters,
	Problem (\ref{Ridge Regression}) has a unique solution $x^*=(\sum_{i=1}^n[u_iu_i^{\T}]+n\rho\mathbf{I})^{-1}\sum_{i=1}^n[u_iu_i^{\T}]\tilde{x}_i$.
	
	We compare the performance of R-Push-Pull against other two gradient tracking based algorithms Push-Pull/AB \cite{pu2018push,xin2018linear} and Push-DIGing/ADDOPT \cite{nedic2017achieving,xi2018add}. 
	To model noisy information exchange, we assume the transmitted values such as $x_{i,k}$, $C_{li}y_{i,k}$ and $C_{li}s_{i,k}$ are corrupted with independent Gaussian noises $\mathcal{N}(0,0.01)$.
	Regarding the network topology, we generate a directed graph $\mathcal{G}$ of $15$ nodes by adding random links to a ring network, where a directed link exists between any two nonadjacent nodes with probability $0.3$. Then for R-Push-Pull and Push-Pull, we let $\mathcal{G}_{\mR}=\mathcal{G}_{\mC}=\mathcal{G}$ for simplicity. 
	
	Design matrix $\mC$ as follows: for any agent $i$,
	$C_{li}=\frac{1}{|\outneighbor{\mathbf{C},i}|+1}$ for all $l\in\outneighbor{\mathbf{C},i}$ and $C_{ii}=1-\sum_{l\in \outneighbor{\mathbf{C},i}}C_{li}$. Letting $\gamma=0.5$, the same $\mC_{\gamma}$ is used for all the algorithms.
	In R-Push-Pull and Push-Pull, for any agent $i$, $R_{ij}=\frac{1}{|\inneighbor{\mathbf{R},i}|+1}$ for all $j\in \inneighbor{\mathbf{R},i}$, and $R_{ii}=1-\sum_{j\in \inneighbor{\mathbf{R},i}}R_{ij}$. The matrix $\mR_{\eta}$ is constructed by taking $\eta=0.01$.
	
	\begin{figure}[htbp]
	\centering
	\includegraphics[width=3.2in]{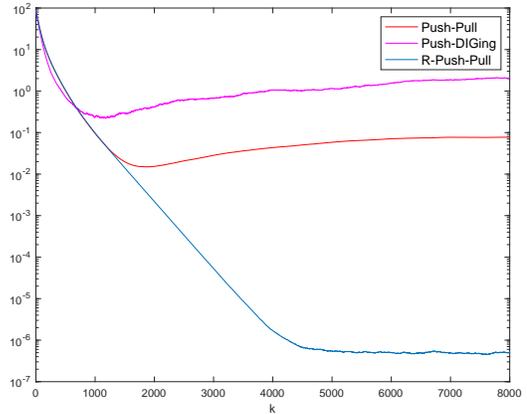} 
		\caption{Comparison of the performance of R-Push-Pull, Push-Pull/AB and Push-DIGing/ADDOPT, measured by $\frac{1}{n}\bE[\|x_{i,k}-x^*\|^2]$. The expected errors are approximated by averaging over $50$ simulation results. Dimension $p=10$, stepsize $\alpha=0.01$ and penalty parameter $\rho=0.01$.}
		\label{Figure_comparison}
	\end{figure}

Fig. \ref{Figure_comparison} compares the performance of different algorithms with respect to $\frac{1}{n}\bE[\|x_{i,k}-x^*\|^2]$.
It can be seen that initially all the errors decrease exponentially fast at comparable rates. However, the errors for Push-Pull/AB and Push-DIGing/ADDOPT eventually increase over time while R-Push-Pull achieves linear convergence to a small neighborhood of $0$. This sharp contrast verifies the effectiveness of the proposed algorithm.
	
	
	\section{Conclusions}
	\label{sec: conclusions}
	In this paper, we introduce a robust gradient tracking method (R-Push-Pull) for distributed optimization over directed networks.
	R-Push-Pull inherits the advantage of Push-pull and achieves linear convergence to the optimal solution with exact information fusion.
	Under noisy information exchange, R-Push-Pull is more robust than the other gradient tracking based algorithms. We show the solutions obtained by each agent reach a neighborhood of the optimum in expectation exponentially fast under a constant stepsize policy. We also provide a numerical example that demonstrate the effectiveness of R-Push-Pull.
	
	
	\section*{ACKNOWLEDGMENT}
	We would like to thank Wei Shi from Princeton University and Jinming Xu from Zhejiang University for helpful discussions.
	
	
	\bibliographystyle{IEEEtran}
	\bibliography{mybib}
	
	\section{APPENDIX}
	
	\subsection{Supporting Lemmas}
	\label{subsec: supporting lemmas}
	\begin{lemma}
		\label{lem: Lipschitz implications}
		Under Assumption \ref{asp; strconvex Lipschitz}, there holds
		\begin{subequations}
			\begin{align}
			& \|h_k-g_k\|_2 \le \frac{L}{\sqrt{n}}\|\mx_k-\mathbf{1}\ox_k\|_2, \label{h_k-g_k}\\
			& \bE[\|\oy_k-h_k\|_2^2\mid \mathcal{F}_k] \le \frac{\gamma^2\sigma_{\mepsilon}^2}{n}, \label{oy_k-h_k}\\
			& \|g_k\|_2 \le L\|x_k-x^*\|_2. \label{g_k}
			\end{align}
		\end{subequations}
		In addition, when $\ta \le 2/(\mu+L)$, we have
		\begin{equation}
		\label{opt_contraction}
		\|\ox_k-\ta g_k-x^*\|_2 \le (1-\ta \mu)\|\ox_k-x^*\|_2, \ \ \forall k.
		\end{equation}
	\end{lemma}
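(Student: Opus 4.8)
The plan is to handle the four inequalities separately, since \eqref{h_k-g_k}, \eqref{oy_k-h_k} and \eqref{g_k} are elementary consequences of Assumptions~\ref{asp; strconvex Lipschitz}--\ref{asp: noises} together with the definitions of $h_k$, $g_k$ and $\oy_k$, whereas \eqref{opt_contraction} needs a more careful contraction argument.

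For \eqref{h_k-g_k} and \eqref{g_k}, I would start from $h_k-g_k=\frac{1}{n}\sum_{i=1}^n\left(\nabla f_i(x_{i,k})-\nabla f_i(\ox_k)\right)$, apply the triangle inequality, bound each summand by $L\|x_{i,k}-\ox_k\|_2$ using $L$-Lipschitzness of $\nabla f_i$, and finish with Cauchy--Schwarz, $\sum_{i=1}^n\|x_{i,k}-\ox_k\|_2\le\sqrt{n}\left(\sum_{i=1}^n\|x_{i,k}-\ox_k\|_2^2\right)^{1/2}$, recognizing the last quantity as $\|\mx_k-\mathbf{1}\ox_k\|_2$ in the sense of Definition~\ref{def: norm n p}; the factors of $n$ combine to $L/\sqrt{n}$. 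Inequality \eqref{g_k} follows the same pattern after noting that optimality gives $\frac{1}{n}\sum_{i=1}^n\nabla f_i(x^*)=\nabla\barf(x^*)=\mathbf{0}$, so that $g_k=\frac{1}{n}\sum_{i=1}^n\left(\nabla f_i(\ox_k)-\nabla f_i(x^*)\right)$ and Lipschitzness yields $\|g_k\|_2\le L\|\ox_k-x^*\|_2$ (reading the ``$x_k$'' in the statement as $\ox_k$).

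For \eqref{oy_k-h_k}, I would use $\oy_k-h_k=\frac{\gamma}{n}\mathbf{1}^{\T}\mepsilon_k$, so that $\|\oy_k-h_k\|_2^2=\frac{\gamma^2}{n^2}\|\mathbf{1}^{\T}\mepsilon_k\|_2^2$. Bounding column-by-column, Cauchy--Schwarz gives $\bigl(\mathbf{1}^{\T}\mepsilon_k^{(j)}\bigr)^2\le n\|\mepsilon_k^{(j)}\|_2^2$ for each of the $p$ columns, and summing yields $\|\mathbf{1}^{\T}\mepsilon_k\|_2^2\le n\|\mepsilon_k\|^2$ in the norm of Definition~\ref{def: norm n p}. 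Taking $\bE[\cdot\mid\mathcal{F}_k]$ and invoking independence of $\mepsilon_k$ from $\mathcal{F}_k$ together with $\bE[\|\mepsilon_k\|^2]\le\sigma_{\mepsilon}^2$ from Assumption~\ref{asp: noises} produces the claimed $\gamma^2\sigma_{\mepsilon}^2/n$.

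The main obstacle is \eqref{opt_contraction}. Since $g_k=\nabla\barf(\ox_k)$ with $\barf$ being $\mu$-strongly convex, $L$-smooth, and $\nabla\barf(x^*)=\mathbf{0}$, this reduces to the sharp gradient-descent contraction $\|\ox_k-\ta g_k-x^*\|_2\le(1-\ta\mu)\|\ox_k-x^*\|_2$. The subtlety is that applying co-coercivity to $\barf$ directly only delivers the \emph{weaker} factor $\sqrt{1-2\ta\mu L/(\mu+L)}\ge 1-\ta\mu$, so the naive route does not close the bound. Instead I would split off the strong-convexity part: set $h:=\barf-\frac{\mu}{2}\|\cdot\|_2^2$, which is convex and $(L-\mu)$-smooth, and apply co-coercivity to $h$, namely $\langle w,\ox_k-x^*\rangle\ge\frac{1}{L-\mu}\|w\|_2^2$ where $w:=\nabla h(\ox_k)-\nabla h(x^*)=g_k-\mu(\ox_k-x^*)$. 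Writing $\ox_k-\ta g_k-x^*=(1-\ta\mu)(\ox_k-x^*)-\ta w$ and expanding the square, the cross term is controlled via this inequality (using $1-\ta\mu\ge 0$), leaving the coefficient $\ta^2-\frac{2\ta(1-\ta\mu)}{L-\mu}$ on $\|w\|_2^2$. A short computation shows this coefficient is nonpositive exactly when $\ta\le 2/(\mu+L)$, so the $\|w\|_2^2$ term can be discarded, yielding $\|\ox_k-\ta g_k-x^*\|_2^2\le(1-\ta\mu)^2\|\ox_k-x^*\|_2^2$ and hence the claim (the degenerate case $L=\mu$ being immediate since then $g_k=\mu(\ox_k-x^*)$).
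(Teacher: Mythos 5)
Your proof is correct. For \eqref{h_k-g_k}, \eqref{oy_k-h_k} and \eqref{g_k} you follow essentially the same route as the paper: triangle inequality plus $L$-Lipschitzness plus Cauchy--Schwarz for the first and third (including the correct observation that $\sum_i\nabla f_i(x^*)=0$ and that the ``$x_k$'' in \eqref{g_k} should be read as $\ox_k$, which is exactly what the paper's own proof establishes), and the identity $\oy_k-h_k=\frac{\gamma}{n}\mathbf{1}^{\T}\mepsilon_k$ together with Assumption~\ref{asp: noises} for the second. The only genuine divergence is \eqref{opt_contraction}: the paper does not prove it at all but cites Lemma~10 of \cite{qu2017harnessing}, whereas you supply a complete, self-contained argument via the splitting $h:=\barf-\frac{\mu}{2}\|\cdot\|_2^2$, co-coercivity of the convex $(L-\mu)$-smooth function $h$, and the decomposition $\ox_k-\ta g_k-x^*=(1-\ta\mu)(\ox_k-x^*)-\ta w$; your algebra checks out (the coefficient $\ta^2-\frac{2\ta(1-\ta\mu)}{L-\mu}$ on $\|w\|_2^2$ is nonpositive precisely when $\ta\le 2/(\mu+L)$, and $1-\ta\mu\ge 0$ holds in that range), and you correctly flag that naive co-coercivity applied to $\barf$ itself only yields the weaker factor $\sqrt{1-2\ta\mu L/(\mu+L)}$. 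What your approach buys is a proof that stands on its own without the external reference; what the paper's citation buys is brevity. Both are valid.
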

	\begin{proof}
		In light of Assumption \ref{asp; strconvex Lipschitz},
		\begin{multline*}
		\|h_k-g_k\|_2=\frac{1}{n}\|\mathbf{1}^{\T}\nabla F(\mx_{k})-\mathbf{1}^{\T}\nabla F(\mathbf{1}\ox_k)\|_2\\
		\le \frac{L}{n}\sum_{i=1}^n \|x_{i,k}-\ox_k\|_2\le \frac{L}{\sqrt{n}}\|\mx_k-\mathbf{1}\ox_k\|_2.
		\end{multline*}
		From (\ref{oy_ave}) and Assumption \ref{asp: noises}, we have
		\sp{\begin{equation*}
			\bE[\|\bar{y}_k-h_k\|_2^2\mid \mathcal{F}_k] = \frac{\gamma^2}{n^2}\bE[\|\mathbf{1}^{\T}\mepsilon_k\|_2^2\mid \mathcal{F}_k] \le \frac{\gamma^2\sigma_{\mepsilon}^2}{n}.
			\end{equation*}}
		Finally, Assumption \ref{asp; strconvex Lipschitz} leads to
		\begin{multline*}
		\|g_k\|_2 = \frac{1}{n}\|\mathbf{1}^{\T}\nabla F(\mathbf{1}\ox_k)-\mathbf{1}^{\T}\nabla F(\mathbf{1}x^*)\|_2\\
		\le \frac{L}{n}\sum_{i=1}^n \|\ox_k-x^*\|_2 = L\|\ox_k-x^*\|_2.
		\end{multline*}
		Proof of the relation (\ref{opt_contraction}) can be found in \cite{qu2017harnessing} Lemma 10.
	\end{proof}
	
	\begin{lemma}
		\label{lem: spectral radii and norms}
		(Adapted from Lemma 3 and Lemma 4 in \cite{pu2020push}) Suppose Assumption \ref{asp: stochastic} hold. 
		There exist vector norms $\|\cdot\|_{\mrR}$ and $\|\cdot\|_{\mrC}$, defined as $\|\mx\|_{\mrR}:=\|\tilde{\mR} \mx\|_2$ and $\|\mx\|_{\mrC}:=\|\tilde{\mC} \mx\|_2$ for all $\mx \in \mathbb{R}^n$, where $\tilde{\mR},\tilde{\mC}\in\mathbb{R}^{n\times n}$ are some reversible matrices, 
		such that $\tau_{\mrR}:=\|\mR_{\eta}-\frac{\mathbf{1}u^{\T}}{n}\|_{\mrR}<1$, $\tau_{\mrC}:=\|\mC_{\gamma}-\frac{v\mathbf{1}^{\T}}{n}\|_{\mrC}<1$,\footnote{With a slight abuse of notation,
			we do not distinguish between the vector norms on $\mathbb{R}^n$ and their induced
			matrix norms, e.g., for any matrix $\mM\in\mathbb{R}^{n\times n}$ and $\|\mM\|_{\mrR}:=\tilde{\mR}\mM\tilde{\mR}^{-1}$, $\|\mM\|_{\mrC}:=\tilde{\mC}\mM\tilde{\mC}^{-1}$.} and $\tau_{\mrR}$ and $\tau_{\mrC}$ are arbitrarily close to the spectral radii $\rho(\mR_{\eta}-\mathbf{1}u^{\T}/n)<1$ and $\rho(\mC_{\gamma}-v\mathbf{1}^{\T}/n)<1$, respectively. In addition, given any diagonal matrix $\mM\in\mathbb{R}^{n\times n}$, we have $\|\mM\|_{\mrR}=\|\mM\|_{\mrC}=\|\mM\|_2$.
	\end{lemma}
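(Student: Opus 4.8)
The plan is to reduce the statement to the corresponding results for Push-Pull, namely Lemmas 3 and 4 of \cite{pu2020push}, by verifying that $\mR_\eta$ and $\mC_\gamma$ satisfy exactly the structural hypotheses there imposed on $\mR$ and $\mC$. First I would observe that $\mR_\eta=(1-\eta)\mI+\eta\mR$ is a convex combination of row-stochastic matrices and is therefore row-stochastic; for $\eta\in(0,1]$ its off-diagonal entries equal $\eta R_{ij}$, which are positive exactly when $R_{ij}>0$, while its diagonal entries $(1-\eta)+\eta R_{ii}$ are strictly positive. Hence $\mathcal{G}_{\mR_\eta}$ has the same edge set as $\mathcal{G}_\mathbf{R}$ (up to self-loops, which are irrelevant to spanning trees), so Assumption~\ref{asp: stochastic} holds for $\mR_\eta$; moreover $u^{\T}\mR_\eta=u^{\T}$ and $\mR_\eta\one=\one$, so the eigenvectors $u,\one$ of Lemma~\ref{lem: eigenvectors u v} are unchanged. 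The identical argument applied to $\mC_\gamma=(1-\gamma)\mI+\gamma\mC$ shows it is column-stochastic with positive diagonal, $\mathcal{G}_{\mC_\gamma^{\T}}=\mathcal{G}_{\mC^{\T}}$, and $\mC_\gamma v=v$, $\one^{\T}\mC_\gamma=\one^{\T}$. With this reduction in hand I would invoke Lemma~3 (resp.\ Lemma~4) of \cite{pu2020push} with $\mR$ replaced by $\mR_\eta$ (resp.\ $\mC$ by $\mC_\gamma$), which directly delivers the invertible matrices $\tilde{\mR},\tilde{\mC}$, the induced norms, the contraction estimates $\tau_{\mrR},\tau_{\mrC}<1$ arbitrarily close to the respective spectral radii, and the diagonal-matrix identity.

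For self-containedness I would also record why the two spectral radii are strictly less than one. Writing $P:=\one u^{\T}/n$, one verifies $P^2=P$ and $\mR_\eta P=P\mR_\eta=P$, so $P$ is the spectral projection onto the eigenvalue-$1$ eigenspace of $\mR_\eta$; consequently $\mR_\eta-P$ has the same spectrum as $\mR_\eta$ except that the eigenvalue $1$ is moved to $0$. Because the spanning-tree and positive-diagonal conditions force $1$ to be a simple eigenvalue of $\mR_\eta$ with every other eigenvalue strictly inside the unit disk, it follows that $\rho(\mR_\eta-P)<1$; the standard result \cite[Lemma 5.6.10]{horn1990matrix} then produces, for each $\varepsilon>0$, an invertible $\tilde{\mR}$ with $\|\tilde{\mR}(\mR_\eta-P)\tilde{\mR}^{-1}\|_2\le\rho(\mR_\eta-P)+\varepsilon$, yielding $\tau_{\mrR}<1$ once $\varepsilon$ is small. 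The argument for $\mC_\gamma-v\one^{\T}/n$ is the transpose of this one.

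The delicate point, and the main obstacle, is the diagonal identity $\|\mM\|_{\mrR}=\|\mM\|_2$: the generic approximating similarity of \cite{horn1990matrix} does \emph{not} preserve the $2$-norm of diagonal matrices, since $\tilde{\mR}\mM\tilde{\mR}^{-1}$ is in general not unitarily equivalent to $\mM$. What is needed is that $\tilde{\mR}$ can be chosen with mutually orthogonal columns, i.e.\ $\tilde{\mR}^{\T}\tilde{\mR}$ diagonal, equivalently $\tilde{\mR}=WD$ with $W$ orthogonal and $D$ positive diagonal; for such $\tilde{\mR}$ and any diagonal $\mM$ one has $\tilde{\mR}\mM\tilde{\mR}^{-1}=WD\mM D^{-1}W^{\T}=W\mM W^{\T}$, which is orthogonally equivalent to $\mM$ and hence shares its $2$-norm. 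Arranging the construction so that it simultaneously contracts $\mR_\eta-P$ (giving $\tau_{\mrR}<1$) and keeps $\tilde{\mR}$ orthogonal-columned is exactly the technical content of the proofs of Lemmas 3--4 of \cite{pu2020push}; carefully reproducing that construction, rather than the eigenvalue bookkeeping above, is the step that requires the most care.
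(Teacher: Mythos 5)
Your proposal is correct and follows essentially the same route as the paper, which offers no proof of its own beyond the citation: the lemma is obtained by checking that $\mR_{\eta}$ and $\mC_{\gamma}$ inherit row/column stochasticity, positive diagonals, the induced-graph/spanning-tree structure, and the eigenvectors $u$, $\one$, $v$ from Assumption~\ref{asp: stochastic}, and then invoking Lemmas 3 and 4 of \cite{pu2020push}. Your added material — the spectral-projection argument for $\rho(\mR_{\eta}-\one u^{\T}/n)<1$ and the observation that the diagonal-matrix identity forces $\tilde{\mR}$ to have orthogonal columns rather than being a generic Horn--Johnson similarity — is accurate and goes beyond what the paper records.
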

	
	The following two lemmas are also taken from \cite{pu2018push}.
	\begin{lemma}
		\label{lem: matrix norm production}
		Given an arbitrary norm $\|\cdot\|$, for any $\mW\in\mathbb{R}^{n\times n}$ and $\mx\in\mathbb{R}^{n\times p}$, we have $\|\mW\mx\|\le \|\mW\|\|\mx\|$. For any $w\in\mathbb{R}^{n\times 1}$ and $x\in\mathbb{R}^{1\times p}$, we have $\|wx\|=\|w\|\|x\|_2$.
	\end{lemma}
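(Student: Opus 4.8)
The plan is to unwind Definition \ref{def: norm n p} column by column and reduce both claims to the corresponding one-dimensional facts about the base vector norm $\|\cdot\|$ on $\mathbb{R}^n$ and the Euclidean norm $\|\cdot\|_2$. The only conceptual ingredient is that the outer layer $\|\cdot\|_2$ acting on the vector of column-norms is monotone on the nonnegative orthant, which lets a column-wise inequality propagate to the composite norm.

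For the first claim I would write $\mx$ in terms of its columns $\mx^{(1)},\ldots,\mx^{(p)}\in\mathbb{R}^n$ and observe that the $j$-th column of $\mW\mx$ is exactly $\mW\mx^{(j)}$. The defining property of the induced matrix norm then gives $\|\mW\mx^{(j)}\|\le\|\mW\|\,\|\mx^{(j)}\|$ for every $j$, so the nonnegative vector $\big[\|\mW\mx^{(1)}\|,\ldots,\|\mW\mx^{(p)}\|\big]$ is dominated entrywise by $\|\mW\|\big[\|\mx^{(1)}\|,\ldots,\|\mx^{(p)}\|\big]$.

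The crucial step is then monotonicity of the Euclidean norm: if $0\le a_j\le b_j$ for all $j$, then $\sum_j a_j^2\le\sum_j b_j^2$ and hence $\|a\|_2\le\|b\|_2$. Applying this to the two vectors above, together with positive homogeneity of $\|\cdot\|_2$ and Definition \ref{def: norm n p}, yields $\|\mW\mx\|\le\|\mW\|\,\|\mx\|$. For the second claim, writing $x=[x_1,\ldots,x_p]$ shows that the $j$-th column of the outer product $wx$ is the scalar multiple $x_j w$; absolute homogeneity of the base norm gives $\|x_j w\|=|x_j|\,\|w\|$, so Definition \ref{def: norm n p} turns $\|wx\|$ into $\big\|[\,|x_1|\,\|w\|,\ldots,|x_p|\,\|w\|\,]\big\|_2$. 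Pulling the common factor $\|w\|$ out of the Euclidean norm leaves $\|w\|\,(\sum_j|x_j|^2)^{1/2}=\|w\|\,\|x\|_2$, which is the claimed identity; here the relation is an exact equality rather than an inequality precisely because absolute homogeneity introduces no slack.

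There is no serious obstacle. The only point requiring care is bookkeeping of which norm lives on which space -- the base norm $\|\cdot\|$ on $\mathbb{R}^n$, its Definition \ref{def: norm n p} extension to $\mathbb{R}^{n\times p}$, and the outer Euclidean norm $\|\cdot\|_2$ on the $p$-vector of column-norms -- and invoking monotonicity of $\|\cdot\|_2$ on nonnegative vectors at exactly the right moment.
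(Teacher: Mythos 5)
Your proof is correct. The paper itself offers no argument for this lemma---it simply cites \cite{pu2018push}---and your column-by-column unwinding of Definition~\ref{def: norm n p}, using the induced-norm bound $\|\mW\mx^{(j)}\|\le\|\mW\|\|\mx^{(j)}\|$ together with monotonicity of $\|\cdot\|_2$ on nonnegative vectors for the inequality, and absolute homogeneity for the exact equality $\|wx\|=\|w\|\|x\|_2$, is exactly the standard argument one would supply.
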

	\begin{lemma}
		\label{lem: norm equivalence}
		There exist constants $\delta_{\mrC,\mrR}, \delta_{\mrC,2}, \delta_{\mrR,\mrC}, \delta_{\mrR,2}>0$ such that for all $\mx\in\mathbb{R}^{n\times p}$, we have  $\|\mx\|_{\mrC}\le \delta_{\mrC,\mrR}\|\mx\|_{\mrR}$, $\|\mx\|_{\mrC}\le \delta_{\mrC,2}\|\mx\|_2$, $\|\mx\|_{\mrR}\le \delta_{\mrR,\mrC}\|\mx\|_{\mrC}$,  and $\|\mx\|_{\mrR}\le \delta_{\mrR,2}\|\mx\|_2$. In addition, without
		loss of generality, we can assume $\|\mx\|_2\le \|\mx\|_{\mrR}$ and $\|\mx\|_2\le \|\mx\|_{\mrC}$ for all $\mx$.
	\end{lemma}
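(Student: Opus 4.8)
The plan is to reduce everything to the equivalence of norms on the finite-dimensional space $\mathbb{R}^n$ and then lift the resulting column-wise bounds to $\mathbb{R}^{n\times p}$ through Definition~\ref{def: norm n p}. Recall from Lemma~\ref{lem: spectral radii and norms} that the two norms are generated by the invertible matrices $\tilde{\mR},\tilde{\mC}$, namely $\|x\|_{\mrR}=\|\tilde{\mR}x\|_2$ and $\|x\|_{\mrC}=\|\tilde{\mC}x\|_2$ for $x\in\mathbb{R}^n$. First I would make each of the four constants explicit. For any column vector $x\in\mathbb{R}^n$, writing $\tilde{\mC}x=(\tilde{\mC}\tilde{\mR}^{-1})(\tilde{\mR}x)$ and applying submultiplicativity of the induced $2$-norm (largest singular value $\sigma_{\max}$) gives $\|x\|_{\mrC}\le\sigma_{\max}(\tilde{\mC}\tilde{\mR}^{-1})\|x\|_{\mrR}$, so one may take $\delta_{\mrC,\mrR}=\sigma_{\max}(\tilde{\mC}\tilde{\mR}^{-1})$; analogously $\delta_{\mrR,\mrC}=\sigma_{\max}(\tilde{\mR}\tilde{\mC}^{-1})$, $\delta_{\mrC,2}=\sigma_{\max}(\tilde{\mC})$, and $\delta_{\mrR,2}=\sigma_{\max}(\tilde{\mR})$. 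All four are finite and positive because $\tilde{\mR},\tilde{\mC}$ are invertible.

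Next I would lift these column-wise inequalities to matrices. For $\mx\in\mathbb{R}^{n\times p}$ with columns $\mx^{(1)},\ldots,\mx^{(p)}$, Definition~\ref{def: norm n p} gives $\|\mx\|_{\mrC}^2=\sum_{j=1}^p\|\mx^{(j)}\|_{\mrC}^2$. Applying the column bound $\|\mx^{(j)}\|_{\mrC}\le\delta_{\mrC,\mrR}\|\mx^{(j)}\|_{\mrR}$ to each term and summing yields $\|\mx\|_{\mrC}^2\le\delta_{\mrC,\mrR}^2\sum_j\|\mx^{(j)}\|_{\mrR}^2=\delta_{\mrC,\mrR}^2\|\mx\|_{\mrR}^2$, and taking square roots recovers the claimed inequality with the \emph{same} constant. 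Since Definition~\ref{def: norm n p} aggregates the column norms with the Euclidean $2$-norm, the other three inequalities follow in exactly the same square-and-sum manner, so no new constants are introduced in passing from $\mathbb{R}^n$ to $\mathbb{R}^{n\times p}$.

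The one part that needs genuine care is the final ``without loss of generality'' normalization $\|\mx\|_2\le\|\mx\|_{\mrR}$ and $\|\mx\|_2\le\|\mx\|_{\mrC}$. By norm equivalence on $\mathbb{R}^n$ one has $\|x\|_2=\|\tilde{\mR}^{-1}(\tilde{\mR}x)\|_2\le\sigma_{\max}(\tilde{\mR}^{-1})\|x\|_{\mrR}$, which is not yet the desired inequality because the constant may exceed $1$. The fix is to rescale the generating matrix, replacing $\tilde{\mR}$ by $c_R\tilde{\mR}$ with $c_R\ge\sigma_{\max}(\tilde{\mR}^{-1})$ (and similarly $\tilde{\mC}\mapsto c_C\tilde{\mC}$). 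This multiplies the vector norm $\|\cdot\|_{\mrR}$ by $c_R$, so that $\|x\|_2\le\sigma_{\max}(\tilde{\mR}^{-1})\|x\|_{\mrR,\mathrm{old}}\le c_R\|x\|_{\mrR,\mathrm{old}}=\|x\|_{\mrR,\mathrm{new}}$. The key point, and the only subtle step, is that this rescaling leaves untouched the induced \emph{matrix} norm used in Lemma~\ref{lem: spectral radii and norms}, since $(c_R\tilde{\mR})\mM(c_R\tilde{\mR})^{-1}=\tilde{\mR}\mM\tilde{\mR}^{-1}$; hence the contraction factors $\tau_{\mrR},\tau_{\mrC}$ are unchanged and remain strictly below $1$, so the normalization is indeed harmless. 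Lifting $\|x\|_2\le\|x\|_{\mrR}$ to $\mathbb{R}^{n\times p}$ by the same square-and-sum argument completes the proof.
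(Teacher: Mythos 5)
The paper does not actually prove this lemma; it simply imports it from \cite{pu2018push}, so there is no in-paper argument to compare against. Your blind proof is correct and self-contained: the column-vector bounds via $\|x\|_{\mrC}=\|(\tilde{\mC}\tilde{\mR}^{-1})(\tilde{\mR}x)\|_2\le\sigma_{\max}(\tilde{\mC}\tilde{\mR}^{-1})\|x\|_{\mrR}$ (and its three analogues) are valid because $\tilde{\mR},\tilde{\mC}$ are invertible, and the lift to $\mathbb{R}^{n\times p}$ goes through exactly as you say, since Definition~\ref{def: norm n p} gives $\|\mx\|_{\mrC}^2=\sum_{j=1}^p\|\mx^{(j)}\|_{\mrC}^2$, so the constants survive the square-and-sum step unchanged. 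You also correctly identified and resolved the only genuinely delicate point: the ``without loss of generality'' normalization requires rescaling $\tilde{\mR}\mapsto c_R\tilde{\mR}$, and this is harmless precisely because the induced matrix norm $\|\mM\|_{\mrR}=\|\tilde{\mR}\mM\tilde{\mR}^{-1}\|_2$ is invariant under that rescaling, so $\tau_{\mrR},\tau_{\mrC}<1$ from Lemma~\ref{lem: spectral radii and norms} are preserved. The only cosmetic omission is a remark that the rescaling changes the numerical values of the four $\delta$ constants (they remain finite and positive, so the statement is unaffected); otherwise nothing is missing.
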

	
	\subsection{Proof of Lemma \ref{lem: important inequalities}}
	\label{proof lem: important inequalities}
	
	The three inequalities embedded in (\ref{main ineqalities}) come from (\ref{ox pre}), (\ref{mx-ox pre}), and (\ref{my-oy pre}), respectively.
	
	\subsubsection*{First inequality}
	By Assumption \ref{asp: noises} and relation (\ref{opt_contraction}) in Lemma \ref{lem: Lipschitz implications},
	we obtain from (\ref{ox pre}) that
	{\small\begin{equation}
		\label{x_bar_error_pre}
		\begin{split}
		& \bE[\|\ox_{k+1}-x^*\|_2^2\mid \mathcal{F}_k]\\
		& \le \bE[\|\ox_k-\ta g_k-x^*-\ta (h_k-g_k)-\frac{\alpha}{n}u^{\T}(\my_k-v\oy_k)\|_2^2\mid \mathcal{F}_k]\\
		& \quad+\bE[\|\ta\frac{\gamma}{n}\mathbf{1}^{\T}\mepsilon_k\|_2^2\mid \mathcal{F}_k]+\bE[\|\frac{\eta}{n}u^{\T}\mxi_k\|_2^2\mid \mathcal{F}_k]\\
		& \le (1-\ta \mu)^2\|\ox_k-x^*\|_2^2+\|\ta (h_k-g_k)+\frac{\alpha}{n}u^{\T}(\my_k-v\oy_k)\|_2^2\\
		& \quad + 2(1-\ta \mu)\|\ox_k-x^*\|_2 \|\ta (h_k-g_k)+\frac{\alpha}{n}u^{\T}(\my_k-v\oy_k)\|_2\\
		& \quad +\frac{\ta^2\gamma^2\sigma_{\mepsilon}^2}{n^2}+\frac{\eta^2\|u\|_2^2 \sigma_{\mxi}^2}{n^2}.
		\end{split}
		\end{equation}}\normalsize
	Notice that
	\begin{multline*}
	2\|\ox_k-x^*\|_2 \|\ta (h_k-g_k)+\frac{\alpha}{n}u^{\T}(\my_k-v\oy_k)\|_2\\
	\le \ta\mu \|\ox_k-x^*\|_2^2 +\frac{1}{\ta \mu}\|\ta (h_k-g_k)+\frac{\alpha}{n}u^{\T}(\my_k-v\oy_k)\|_2^2,
	\end{multline*}
	and from (\ref{h_k-g_k}) in Lemma \ref{lem: Lipschitz implications},
	\begin{align*}
	& \|\ta (h_k-g_k)+\frac{\alpha}{n}u^{\T}(\my_k-v\oy_k)\|_2^2\\
	& \le 2\|\ta (h_k-g_k)\|_2^2+2\|\frac{\alpha}{n}u^{\T}(\my_k-v\oy_k)\|_2^2\\
	& \le \frac{2\ta^2 L^2\|\mx_k-\mathbf{1}\ox_k\|_2^2}{n}+\frac{2\alpha^2\|u\|_2^2 \|\my_k-v\oy_k\|_2^2}{n^2}.
	\end{align*}
	We have from (\ref{x_bar_error_pre}) that
	{\small\begin{align*}
		& \bE[\|\ox_{k+1}-x^*\|_2^2\mid \mathcal{F}_k]\\
		& \le (1-\ta \mu)\|\ox_k-x^*\|_2^2+\frac{1}{\ta \mu}\|\ta (h_k-g_k)+\frac{\alpha}{n}u^{\T}(\my_k-v\oy_k)\|_2^2\\
		& \quad + \frac{\ta^2\gamma^2\sigma_{\mepsilon}^2}{n^2}+\frac{\eta^2\|u\|_2^2 \sigma_{\mxi}^2}{n^2}\\
		& \le (1-\ta \mu)\|\ox_k-x^*\|_2^2+\frac{2\ta L^2\|\mx_k-\mathbf{1}\ox_k\|_2^2}{\mu n}\\
		& \quad +\frac{2\alpha^2\|u\|_2^2 \|\my_k-v\oy_k\|_2^2}{\ta \mu n^2}+ \frac{\ta^2\gamma^2\sigma_{\mepsilon}^2}{n^2}+\frac{\eta^2\|u\|_2^2 \sigma_{\mxi}^2}{n^2}\\
		& \le  (1-\ta \mu)\|\ox_k-x^*\|_2^2+c_1\alpha\|\mx_k-\mathbf{1}\ox_k\|_{\mrR}^2+c_2\alpha\|\my_k-v\oy_k\|_{\mrC}^2\\
		& \quad+c_3 \alpha^2\gamma^2\sigma_{\mepsilon}^2+c_4\eta^2\sigma_{\mxi}^2,
		\end{align*}}\normalsize
	where the last inequality is from Lemma \ref{lem: norm equivalence} and
	{\small\begin{align}
		\label{c1-c4}
		c_1:= \frac{2u^{\T}vL^2}{\mu n^2},\,c_2:=\frac{2\|u\|_2^2}{u^{\T}v\mu n},\,c_3:=\frac{(u^{\T}v)^2}{n^4},\,c_4:=\frac{\|u\|_2^2 }{n^2}.
		\end{align}}\normalsize
	Taking full expectation on both sides of the inequality completes the proof.
	
	\subsubsection*{Second inequality}
	By relation (\ref{mx-ox pre}) and Lemma \ref{lem: matrix norm production}, we see that
	{\small\begin{equation}
		\label{x_error_pre}
		\begin{split}
		& \bE[\|\mx_{k+1}-\mathbf{1}\ox_{k+1}\|_{\mrR}^2\mid \mathcal{F}_k]\\
		& \le \tau_{\mrR}^2\|\mx_k-\mathbf{1}\ox_k\|_{\mrR}^2+\alpha^2\|\mI-\frac{\mathbf{1}u^{\T}}{n}\|_{\mrR}^2\bE[\|\my_k\|_{\mrR}^2\mid \mathcal{F}_k]\\
		& \quad +2\alpha\tau_{\mrR}\|\mI-\frac{\mathbf{1}u^{\T}}{n}\|_{\mrR}\|\mx_k-\mathbf{1}\ox_k\|_{\mrR}\bE[\|\my_k\|_{\mrR}\mid \mathcal{F}_k]\\
		& \quad+\eta^2\bE[\|\left(\mI-\frac{\mathbf{1}u^{\T}}{n}\right)\mxi_k\|_{\mrR}^2]\\
		& \le \frac{(1+\tau_{\mrR}^2)}{2}\|\mx_k-\mathbf{1}\ox_k\|_{\mrR}^2\\
		& \quad +\frac{\alpha^2(1+\tau_{\mrR}^2)}{(1-\tau_{\mrR}^2)}\|\mI-\frac{\mathbf{1}u^{\T}}{n}\|_{\mrR}^2\bE[\|\my_k\|_{\mrR}^2\mid \mathcal{F}_k]\\
		& \quad +\eta^2\|\tilde{\mR}\left(\mI-\frac{\mathbf{1}u^{\T}}{n}\right)\|_2^2\bE[\|\mxi_k\|_2^2].
		\end{split}
		\end{equation}}\normalsize
	
	To bound $\bE[\|\my_k\|_{\mrR}^2\mid \mathcal{F}_k]$, note that
	{\small\begin{align*}
		\|\my_k\|_{\mrR}^2\le & 2\|\my_k-v\oy_k\|_{\mrR}^2+2\|v\oy_k\|_{\mrR}^2\\
		= & 2\|\my_k-v\oy_k\|_{\mrR}^2+2\|v\|_{\mrR}^2\|\oy_k\|_2^2,
		\end{align*}}\normalsize
	where the equality follows from Lemma \ref{lem: matrix norm production}.
	In light of Lemma \ref{lem: Lipschitz implications},
	{\small
		\begin{equation}
		\label{oy_k bound}
		\begin{split}
		& \bE[\|\oy_k\|_2^2\mid \mathcal{F}_k] = \bE[\|\oy_k-h_k\|_2^2]+\|h_k\|_2^2\\
		& \le \bE[\|\oy_k-h_k\|_2^2]+2\|h_k-g_k\|_2^2 + 2\|g_k\|_2^2\\
		& \le \frac{\gamma^2\sigma_{\mepsilon}^2}{n}+\frac{2L^2}{n}\|\mx_k-\mathbf{1}\ox_k\|_2^2+L^2\|\ox_k-x^*\|_2^2.
		\end{split}
		\end{equation}}\normalsize
	Hence
	{\small\begin{align*}
		& \bE[\|\my_k\|_{\mrR}^2\mid \mathcal{F}_k] \le 2\bE[\|\my_k-v\oy_k\|_{\mrR}^2\mid \mathcal{F}_k]\\
		& \quad +2\|v\|_{\mrR}^2\left(\frac{\gamma^2\sigma_{\mepsilon}^2}{n}+\frac{2L^2}{n}\|\mx_k-\mathbf{1}\ox_k\|_2^2+L^2\|\ox_k-x^*\|_2^2\right).
		\end{align*}}\normalsize
	
	Noticing that $\tau_{\mrR}<1$ and $\bE[\|\mxi_k\|_2^2]\le \sigma_{\mxi}^2$ from Assumption \ref{asp: noises}, relation (\ref{x_error_pre}) leads to
	{\small\begin{align*}
		& \bE[\|\mx_{k+1}-\mathbf{1}\ox_{k+1}\|_{\mrR}^2\mid \mathcal{F}_k]\le \frac{(1+\tau_{\mrR}^2)}{2}\|\mx_k-\mathbf{1}\ox_k\|_{\mrR}^2\\
		& \quad +\frac{4\alpha^2}{(1-\tau_{\mrR}^2)}\|\mI-\frac{\mathbf{1}u^{\T}}{n}\|_{\mrR}^2\bE[\|\my_k-v\oy_k\|_{\mrR}^2\mid \mathcal{F}_k]\\
		& \quad +\frac{4\alpha^2}{(1-\tau_{\mrR}^2)}\|\mI-\frac{\mathbf{1}u^{\T}}{n}\|_{\mrR}^2\|v\|_{\mrR}^2\\
		& \quad \cdot\left(\frac{\gamma^2\sigma_{\mepsilon}^2}{n}+\frac{2L^2}{n}\|\mx_k-\mathbf{1}\ox_k\|_2^2+L^2\|\ox_k-x^*\|_2^2\right)\\
		& \quad+\eta^2\|\tilde{\mR}\left(\mI-\frac{\mathbf{1}u^{\T}}{n}\right)\|_2^2\sigma_{\mxi}^2\\
		& \le  c_5\alpha^2 \|\ox_k-x^*\|_2^2 + \left[\frac{(1+\tau_{\mrR}^2)}{2}+c_6\alpha^2\right] \|\mx_k-\mathbf{1}\ox_k\|_{\mrR}^2\\ 
		& \quad + c_7\alpha^2\bE[\|\my_k-v\oy_k\|_{\mrC}^2\mid \mathcal{F}_k]+ c_8 \alpha^2\gamma^2\sigma_{\mepsilon}^2+c_9\eta^2\sigma_{\mxi}^2,
		\end{align*}}\normalsize
	where Lemma \ref{lem: norm equivalence} was invoked and constants are given by
	{\small\begin{align}
		\label{c5-c9}
		\begin{split}
		& c_5:=\frac{4 L^2}{(1-\tau_{\mrR}^2)}\|\mI-\frac{\mathbf{1}u^{\T}}{n}\|_{\mrR}^2\|v\|_{\mrR}^2\\
		& c_6:= \frac{8 L^2}{(1-\tau_{\mrR}^2)n}\|\mI-\frac{\mathbf{1}u^{\T}}{n}\|_{\mrR}^2\|v\|_{\mrR}^2\\
		& c_7:= \frac{4}{(1-\tau_{\mrR}^2)}\|\mI-\frac{\mathbf{1}u^{\T}}{n}\|_{\mrR}^2\delta_{R,C}\\
		& c_8:= \frac{4}{(1-\tau_{\mrR}^2)}\|\mI-\frac{\mathbf{1}u^{\T}}{n}\|_{\mrR}^2\|v\|_{\mrR}^2\frac{1}{n}\\
		& c_9:= \|\tilde{\mR}\left(\mI-\frac{\mathbf{1}u^{\T}}{n}\right)\|_2^2.
		\end{split}
		\end{align}}\normalsize
	Again, taking full expectation on both sides of the inequality completes the proof.
	
	\subsubsection*{Third inequality}
	It follows from (\ref{my-oy pre}), Lemma \ref{lem: matrix norm production} and Lemma \ref{lem: norm equivalence} that
	{\small
		\begin{equation}
		\label{y_error_pre}
		\begin{split}
		& \|\my_{k+1}-v\oy_{k+1}\|_{\mrC}^2 \le  \tau_{\mrC}^2\|\my_k-v\oy_k\|_{\mrC}^2\\ 
		& \quad +\|\mI-\frac{v\mathbf{1}^{\T}}{n}\|_{\mrC}^2\|\tilde{\nabla}F(\mx_{k+1})-\tilde{\nabla}F(\mx_k)\|_{\mrC}^2\\
		& \quad +2\tau_{\mrC}\|\mI-\frac{v\mathbf{1}^{\T}}{n}\|_{\mrC}\|\my_k-v\oy_k\|_{\mrC}\|\tilde{\nabla}F(\mx_{k+1})-\tilde{\nabla}F(\mx_k)\|_{\mrC}\\
		& \le \frac{(1+\tau_{\mrC}^2)}{2}\|\my_k-v\oy_k\|_{\mrC}^2\\
		& \quad +\frac{(1+\tau_{\mrC}^2)}{(1-\tau_{\mrC}^2)}\|\mI-\frac{v\mathbf{1}^{\T}}{n}\|_{\mrC}^2\delta_{C,2}^2\|\tilde{\nabla}F(\mx_{k+1})-\tilde{\nabla}F(\mx_k)\|_2^2.
		\end{split}
		\end{equation}
	}\normalsize
	We now bound $\bE[\tilde{\nabla}F(\mx_{k+1})-\tilde{\nabla}F(\mx_k)\|_2^2\mid \mathcal{F}_k]$.

		Denote $G_k:=\tilde{\nabla}F(\mx_k)$ and $\nabla_k:=\nabla F(\mx_k)$ for all $k$ for short. We have
		{\small\begin{equation}
			\label{G_k+1-G_k}
			\begin{split}
			& \bE[\|G_{k+1}-G_k\|_2^2\mid \mathcal{F}_k]\\
			& = \bE[\|\nabla_{k+1}-\nabla_k \|_2^2\mid \mathcal{F}_k]\\
			& \quad+2\bE[\langle \nabla_{k+1}-\nabla_k , G_{k+1}-\nabla_{k+1}-G_k+\nabla_k \rangle \mid \mathcal{F}_k]\\
			& \quad+\bE[\|G_{k+1}-\nabla_{k+1}-G_k+\nabla_k \|_2^2\mid \mathcal{F}_k]\\
			& \le \bE[\|\nabla_{k+1}-\nabla_k \|_2^2\mid \mathcal{F}_k]+2\bE[\langle \nabla_{k+1}, -G_k+\nabla_k \rangle \mid \mathcal{F}_k]\\
			& \quad +\sp{2\gamma^2\sigma_{\mepsilon}^2}.
			\end{split}
			\end{equation}}\normalsize
		where we invoked Assumption \ref{asp: noises} for the above inequality.
		
		From an argument that is similar to Lemma 8 in \cite{pu2020distributed}, we have $\bE[\langle \nabla_{k+1}, -G_k+\nabla_k \rangle \mid \mathcal{F}_k]\le \sp{\alpha \gamma^2 L\sigma_{\mepsilon}^2}$. In addition, from Assumption \ref{asp; strconvex Lipschitz} and Assumption \ref{asp: noises},
		{\small\begin{align*}
			& \bE[\|\nabla_{k+1}-\nabla_k \|_2^2\mid \mathcal{F}_k]\le L^2\bE[\|\mx_{k+1}-\mx_k\|_2^2\mid \mathcal{F}_k] \\
			& = L^2\bE[\|\mR_{\eta} \mx_k-\mx_k-\alpha\my_k+\eta\mxi_k\|_2^2\mid \mathcal{F}_k]\\
			& = L^2\bE[\|(\mR_{\eta}-\mI)(\mx_k-\mathbf{1}\ox_k)-\alpha(\my_k-v\oy_k)-\alpha v\oy_k\|_2^2\mid \mathcal{F}_k]\\
			& \quad + \eta^2 L^2\bE[\|\mxi_k\|_2^2\mid \mathcal{F}_k]\\
			& \le 3L^2\|\mR_{\eta}-\mI\|_2^2\|\mx_k-\mathbf{1}\ox_k\|_2^2 + 3\alpha^2 L^2\bE[\|\my_k-v\oy_k\|_2^2\mid \mathcal{F}_k]\\
			& \quad +3\alpha^2 L^2\|v\|_2^2\bE[\|\oy_k\|^2\mid \mathcal{F}_k]+\eta^2 L^2\sigma_{\mxi}^2,
			\end{align*}}\normalsize
			In light of relation (\ref{oy_k bound}), we further obtain
			{\small\begin{align*}
			& \bE[\|\nabla_{k+1}-\nabla_k \|_2^2\mid \mathcal{F}_k]\\
			& \le 3L^2\|\mR_{\eta}-\mI\|_2^2\|\mx_k-\mathbf{1}\ox_k\|_2^2 + 3\alpha^2 L^2\bE[\|\my_k-v\oy_k\|_2^2\mid \mathcal{F}_k]\\
			& \quad +3\alpha^2 L^2\|v\|_2^2\left(\frac{\gamma^2\sigma_{\mepsilon}^2}{n}+\frac{2L^2}{n}\|\mx_k-\mathbf{1}\ox_k\|_2^2+L^2\|\ox_k-x^*\|_2^2\right)\\
			& \quad +\eta^2 L^2\sigma_{\mxi}^2\\
			& = 3\alpha^2 L^2\bE[\|\my_k-v\oy_k\|_2^2\mid \mathcal{F}_k]\\
			& \quad + \left(3L^2\|\mR_{\eta}-\mI\|_2^2+\frac{6\alpha^2 L^4\|v\|_2^2}{n}\right)\|\mx_k-\mathbf{1}\ox_k\|_2^2\\
			& \quad + 3\alpha^2 L^4\|v\|_2^2\|\ox_k-x^*\|_2^2 + \frac{3\alpha^2\gamma^2 L^2 \|v\|_2^2\sigma_{\mepsilon}^2}{n}+\eta^2 L^2\sigma_{\mxi}^2.
			\end{align*}}\normalsize
		Hence relation (\ref{G_k+1-G_k}) leads to
		{\small\begin{align*}
			& \bE[\|G_{k+1}-G_k\|_2^2\mid \mathcal{F}_k] \le 3\alpha^2 L^2\bE[\|\my_k-v\oy_k\|_2^2\mid \mathcal{F}_k]\\
			& \quad + \left(3L^2\|\mR_{\eta}-\mI\|_2^2+\frac{6\alpha^2 L^4\|v\|_2^2 }{n}\right)\|\mx_k-\mathbf{1}\ox_k\|_2^2\\
			& \quad + 3\alpha^2 L^4\|v\|_2^2\|\ox_k-x^*\|_2^2 + \frac{3\alpha^2 \gamma^2 L^2\|v\|_2^2\sigma_{\mepsilon}^2}{n}+\eta^2 L^2\sigma_{\mxi}^2\\
			& \quad+\sp{2\gamma^2(\alpha L+1)\sigma_{\mepsilon}^2}.
			\end{align*}
		}

	In light of Lemma \ref{lem: norm equivalence}, the above inequality, and considering that $\alpha\le 1/L$, we have from (\ref{y_error_pre}) that
	{\small\begin{equation*}
	\begin{split}
	& \bE[\|\my_{k+1}-v\oy_{k+1}\|_{\mrC}^2\mid \mathcal{F}_k] \le c_{10}\alpha^2\|\ox_k-x^*\|_2^2\\
	& \quad  +c_{11}\|\mx_k-\mathbf{1}\ox_k\|_2^2+\left[\frac{(1+\tau_{\mrC}^2)}{2}+c_{12}\alpha^2\right]\bE[\|\my_k-v\oy_k\|_{\mrC}^2\mid \mathcal{F}_k]\\
	& \quad +c_{13}\gamma^2\sigma_{\mepsilon}^2+c_{14}\eta^2\sigma_{\mxi}^2,
	\end{split}
	\end{equation*}}\normalsize
    where
    {\small\begin{equation}
    	\label{c10-c13}
    	\begin{split}
    	& c_{10} := \frac{6}{(1-\tau_{\mrC}^2)}\|\mI-\frac{v\mathbf{1}^{\T}}{n}\|_{\mrC}^2\delta_{C,2}^2\|v\|_2^2 L^4\\
    	& c_{11}:= \frac{2}{(1-\tau_{\mrC}^2)}\|\mI-\frac{v\mathbf{1}^{\T}}{n}\|_{\mrC}^2\delta_{C,2}^2\left(3L^2\|\mR_{\eta}-\mI\|_2^2+\frac{6L^2\|v\|_2^2}{n}\right)\\
    	& c_{12} :=\frac{6}{(1-\tau_{\mrC}^2)}\|\mI-\frac{v\mathbf{1}^{\T}}{n}\|_{\mrC}^2\delta_{C,2}^2 L^2\\
    	& c_{13} := \frac{2}{(1-\tau_{\mrC}^2)}\|\mI-\frac{v\mathbf{1}^{\T}}{n}\|_{\mrC}^2\delta_{C,2}^2\left(\frac{3\|v\|_2^2}{n} +\sp{4}\right)\\
    	& c_{14} := \frac{2}{(1-\tau_{\mrC}^2)}\|\mI-\frac{v\mathbf{1}^{\T}}{n}\|_{\mrC}^2\delta_{C,2}^2 L^2. 
    	\end{split}
    	\end{equation}}\normalsize
    Taking full expectation yields the desired result.
	
\end{document}